\newlist{BC}{enumerate}{1}
\setlist[BC]{label=(BC-\arabic*)}
\newlist{AC}{enumerate}{1}
\setlist[AC]{label=(AC)}
\newtheorem{theorem}{Theorem}[section] 
\newtheorem{proposition}[theorem]{Proposition}
\newtheorem{lemma}[theorem]{Lemma}
\theoremstyle{definition}
\newtheorem{definition}[theorem]{Definition}
\newtheorem{notation}[theorem]{Notation}
\theoremstyle{remark}
\newtheorem{remark}[theorem]{Remark}
\numberwithin{equation}{section}
\newcounter{myenum}
\renewcommand\thetheorem{\@arabic\c@section.\@arabic\c@theorem}
\newcounter{subtheorem}
\renewcommand\thesubtheorem{\thetheorem.\@arabic\c@subtheorem}
\newcommand{\cal}{\mathcal}
\newcommand{\bb}{\mathbb}
\newcommand{\comment}[1]{}
\newcommand{\xym}{\xymatrix}
\newcommand{\into}{\hookrightarrow}
\newcommand{\PP}{\bb{P}}
\DeclareMathOperator{\pic}{Pic}
\DeclareMathOperator{\divfcn}{div}
\DeclareMathOperator{\charct}{char}
\DeclareMathOperator{\Bl}{Bl}
\newcommand{\oline}{\overline}
\newcommand{\ol}{\overline}
\newcommand{\wt}{\widetilde}
\newcommand{\bu}{\bullet}
\newcommand{\ul}{\underline}
\begin{document}

\title{Intersections via resolutions}
\author{Joseph Ross}
\address{University of Southern California Mathematics Department, 3620 South Vermont Avenue, Los Angeles CA, 90089}
\email{josephr@usc.edu}
\urladdr{\url{http://www-bcf.usc.edu/~josephr/}}

%

\keywords{algebraic cycles, intersection theory, resolution of singularities, intersection homology}

\date{\today}

\maketitle
\begin{abstract}
We investigate the viability of defining an intersection product on algebraic cycles on a singular algebraic variety by pushing forward intersection products formed on a resolution of singularities.
For varieties with resolutions having a certain structure (including all varieties over a field of characteristic zero), we obtain a stratification which reflects the geometry of the centers and the exceptional divisors.
This stratification is sufficiently fine that divisors can be intersected with $r$-cycles (for $r \geq 1$), and 2-cycles can be intersected on a fourfold, provided their incidences with the strata are controlled.  Similar pairings are defined on a variety with one-dimensional singular locus.
\end{abstract}

\tableofcontents

\section{Introduction}

The (Borel-Moore) homology of a smooth manifold possesses a canonical ring structure in which the product is represented by intersection of cycle classes.
The Chow groups of a smooth algebraic variety over the complex numbers also admit a ring structure, and the intersection products are compatible via the cycle class map \cite[Cor.~19.2]{Ful}.  In both contexts, a well-behaved intersection theory for cycles fails to extend to spaces with singularities.

In topology, this motivated the definition of cohomology and its cup product operation, which in some sense isolates a subset of the space of cycles (with a different equivalence relation) which can be intersected even if the ambient space has singularities.  There are several analogues in algebraic geometry.
The Friedlander-Lawson theory of algebraic cocycles \cite{FLcocycle} is a geometric approach built from finite correspondences to projective spaces, and similar constructions underlie the motivic cohomology of Friedlander-Voevodsky \cite {FV}.  
The operational Chow cohomology of Fulton \cite{Ful}, on the other hand, adopts a more formal approach.

The intersection homology groups of Goresky and MacPherson \cite{GM1} provide an
interpolation between cohomology and homology: at least for a normal space $X$ there is a sequence of groups:
$$H^{\dim X -*}(X) = IH^{\ol 0}_* (X) \to \cdots \to IH^{\ol p}_*(X) \to \cdots \to IH^{\ol t}_*(X) = H_*(X)$$
factoring the cap product map.  The decoration $\ol p$, called the perversity, is a sequence of integers which prescribes how cycles may meet the strata in a suitable stratification of the (possibly singular) space $X$; in the display above it increases from left to right.  Each intersection homology group $IH^{\ol p}_r(X)$ arises as the homology of a complex of chains (either simplicial chains with respect to a triangulation \cite{GM1}, or singular chains \cite{King}) of perversity $\ol p$.

One of the most interesting features of this theory is the existence of intersection pairings
$$IH^{\ol p}_r(X) \otimes IH^{\ol q}_s(X) \to IH^{\ol p + \ol q}_{r+s - \dim(X)}(X)$$
(provided $\ol p + \ol q \leq \ol t$) generalizing the cap product pairing between cohomology and homology, and providing a generalization of Poincar\'e duality to singular spaces.

The author and Eric Friedlander \cite{intsing} have defined an algebraic cycle counterpart to the geometric approach of Goresky-MacPherson.  In particular we have defined perverse Borel-Moore motivic homology groups $H^{\ol p}_{m}(X, \bb{Z}(r))$ (for a stratified variety $X$ and a perversity $\ol p$) with a cycle class map to the Goresky-MacPherson theory in Chow degree: $H^{\ol p}_{2r}(X, \bb{Z}(r)) \to IH^{\ol p}_{2r}(X, \bb{Z})$ \cite[Defn.~2.3, Prop.~2.5]{intsing}.  We have constructed also a perverse variation of motivic cohomology $H^{i,s, \ol p}(X)$ and pairings \cite[Defn.~5.3, Prop.~6.13]{intsing}:
$$\cap : H^{i,s, \ol p}(X) \otimes H^{\ol q}_m (X, \bb{Z}(r) ) \to H^{\ol p + \ol q}_{m-i}(X, \bb{Z}(r-s)) .$$
There is a canonical morphism $H^{i, s, \ol p}(X) \to H^{\ol p}_{2 d -i}(X, \bb{Z}(d -s))$ \cite[Cor.~6.14]{intsing} with $d = \dim (X)$.  To define an intersection product
$$H^{\ol p}_{m}(X, \bb{Z}(r)) \otimes H^{\ol q}_{n}(X, \bb{Z}(s)) \to H^{\ol p + \ol q}_{m+n - 2 d}(X, \bb{Z}(r+s - d))$$
(i.e., extend $\cap$), one needs to involve the stratification in a more substantial way.

In this paper we approach the problem of defining such a product in Chow degree ($m=2r, n = 2s$) by considering the following (somewhat vague) question: if a singular variety $X$ can be resolved by a smooth variety $\wt X$, to what extent does the intersection product on the Chow groups of $\wt X$ provide a sensible intersection theory for algebraic cycles on $X$?  As one can already see from the case of a proper birational morphism $\pi : \wt X \to X$ between smooth varieties, the push-forward of the intersection formed on $\wt X$ is in general different from the intersection formed on $X$.
On the other hand, one also sees that for a blowup $\wt X \to X$ of a smooth variety $X$ along a smooth subvariety, the push-forward of the intersection formed on $\wt X$ agrees with the intersection formed on $X$ if the cycles have controlled incidence with $Y$; see Proposition \ref{smooth case} for a precise statement.

Our main results establish cases in which intersections formed on a resolution provide well-defined products on modifications of Chow groups: instead of considering cycles modulo rational equivalence, certain strata are singled out by the resolution, and both the cycles and the equivalences among them are required to have controlled incidence with the strata.  If $k$ is a field of characteristic zero, then any $k$-variety $X$ may be resolved by a sequence of blowups along smooth centers $\pi : \wt X \to X$.  From such a resolution we define a stratification of $X$ and construct an intersection pairing on perverse Chow groups (i.e., a map $A_{r, \ol p} (X) \otimes A_{s, \ol q}(X) \to A_{r+s-d}(X)$, possibly with $\bb{Q}$ coefficients) in the following settings:
\begin{itemize}
\item $s=\dim(X)-1$, i.e., one of the cycles is a divisor (Theorem \ref{divisor pairing});
\item $r=s=2$ and $\dim(X)=4$, i.e., there is an intersection pairing on 2-cycles on a fourfold (Theorem \ref{4fold}); and
\item $\dim(X_{sing})=1$ and $r,s$ arbitrary (Theorem \ref{one dim sing}).
\end{itemize}
The pairing is obtained by pushing forward the intersection product of the proper transforms, i.e., is given by $\alpha, \beta \mapsto \pi_* (\wt \alpha \cdot \wt \beta)$.  The stratification is expressed in terms of the geometry of the resolution, and our arguments are accordingly geometric in nature.  In all three cases, the basic idea is to understand how (certain) rational equivalences behave under proper transform.  For concreteness we explain the idea for 2-cycles $\alpha, \beta$ of complementary perversities $\ol{p}, \ol{q}$ on a fourfold $X$.  If one has $\alpha \sim_{\ol p} \alpha'$ on $X$, then $e_\alpha := \wt \alpha - \wt {\alpha'}$ is a cycle on $\wt X$ supported over the singular locus of $X$, with the support controlled by the perversity function $\ol p$.
The perversity $\ol{p}$ provides enough control over the error term $e_\alpha$ that we can find a cycle which is both rationally equivalent to some multiple of $e_\alpha$ and, using the complementarity of the perversity condition $\ol q$, disjoint from $\widetilde{\beta}$ (or $e_\beta$).
Then, at least with rational coefficients, we have the vanishings  $e_\alpha \cdot \wt \beta = \wt \alpha \cdot e_\beta = e_\alpha \cdot e_\beta = 0$ in $A_0(\wt X)$, so that $\pi_* (\wt \alpha \cdot \wt \beta )$ and $\pi_*(\wt \alpha' \cdot \wt \beta')$ coincide as classes in $A_0(X)_\bb{Q}$.
For divisors the arguments are more conceptual and less technical, but the main point is to move error terms arising from (certain) rational equivalences away from cycles of complementary perversity.  When the singular locus of $X$ is one-dimensional, we also employ intersection theory on the exceptional components and subvarieties therein.

For certain pairs of cycles, our product agrees with the one defined by Goresky-MacPherson via the cycle class map, but we do not know if this holds in general.  Another basic question is the dependence on the resolution.  See \S \ref{sec:further} for further discussion of these questions.

In \S \ref{sec:cycles} we establish background and notation.  In particular, we discuss various procedures for constructing stratifications from resolutions; later we point out which constructions are necessary in the different contexts.  The next three sections are devoted to the three situations mentioned above: in \S \ref{sec:divisors} we show the resolution provides a sensible intersection theory for divisors, in \S \ref{sec:onedimsing} we handle the case $\dim(X_{sing}) =1$, and in \S \ref{sec:4fold} we study 2-cycles on a fourfold.  In \S \ref{need integral fibers} we show, through an explicit example, that an ``obvious" coarsening of our stratification, namely by the fiber dimension of the resolution, is insufficiently fine for the resolution to provide a decent intersection product.

\medskip
\textbf{Conventions.} Throughout we work with schemes separated and of finite type over a field $k$.  A variety is an integral $k$-scheme.

\medskip
\textbf{Acknowledgments.} 
The idea that resolutions might provide pairings on perverse cycle groups emerged in conversations with Eric Friedlander.
The author wishes to thank him for his encouragement and interest in this project.  The author was partially supported by National Science Foundation Award DMS-0966589.

\section{Cycles, equivalence relations, and stratifications from resolutions} \label{sec:cycles}

In this section we adapt some of the basic features of Goresky-MacPherson intersection homology to our algebraic context.  This reproduces some material from \cite[\S 2]{intsing}.  Then we discuss stratifications obtained from resolutions.  Finally we demonstrate that our proposal for intersections on a singular variety recovers the usual intersection theory for the blowup of a smooth variety along a smooth subvariety.

A \textit{stratified variety} is a variety $X$ (say of dimension $d$) equipped with a filtration by closed subsets $X^d \into X^{d-1} \into \cdots \into X^2 \into X^1 \into X$ such that $X^i$ has codimension at least $i$ in $X$.  This is usually called a filtered space, which determines a stratified space; since we do not deal with more general stratifications, we ignore the difference.
A \textit{perversity} is a non-decreasing sequence of integers $p_1, p_2, \ldots, p_d$ such that $p_1=0$ and, for all $i$, $p_{i+1}$ equals either $p_i$ or $p_i+1$.  Perversities are denoted $\ol{p}, \ol{q}$, etc.  The perversities we consider range from the zero perversity $\ol{0}$ with $p_i=0$ for all $i$, to the top perversity $\ol{t}$, with $p_i=i-1$ for all $i$.

Let $Z_r(X)$ denote the group of $r$-dimensional algebraic cycles on $X$.  If $X$ is stratified and $\ol p$ is a perversity, we say an $r$-cycle $\alpha$ is \textit{of perversity $\ol{p}$} (or satisfies the perversity condition $\ol{p}$) if for all $i$, the dimension of the intersection $|\alpha| \cap X^i$ is no larger than $r-i +p_i$.  If the codimension of $X^i$ in $X$ is exactly $i$, then the perversity of a cycle measures its failure to meet properly the closed sets occurring in the stratification of $X$.  Let $Z_{r, \oline{p}}(X) \subset Z_r(X)$ denote the group of $r$-dimensional cycles of perversity $\ol{p}$ on the stratified variety $X$.  Typically $X^1$ is the singular locus of $X$, and then the condition $p_1=0$ means that no component of the cycle is contained in the singular locus.

In \cite[Prop.~2.4]{intsing} we defined and characterized a notion of $\ol p$-rational equivalence of algebraic $r$-cycles: we identify two elements of $Z_{r, \ol p}(X)$ if they can be connected by an $\bb{A}^1$-family of $r$-cycles of perversity $\ol p$.  Here we introduce a modification.
We say the cycles $\alpha, \alpha' \in Z_{r, \oline{p}}(X)$ are \textit{weakly rationally equivalent as perversity $\oline{p}$ cycles} (or are weakly $\ol{p}$-rationally equivalent), written $\alpha \sim_{w, \oline{p}} \alpha'$, if there is an equation of rational equivalence all of whose terms satisfy the perversity condition $\ol{p}$.  Explicitly this means there exist subvarieties $W_1, \ldots, W_a$ of dimension $r+1$ in $X$, and rational functions $g_i : W_i \dashrightarrow \PP^1$, such that $\alpha - \alpha' = \sum_i [g_i(0)] - [g_i(\infty)]$ in $Z_{r, \ol{p}}(X)$ (i.e., for all $i$, both $[g_i(0)]$ and $[g_i(\infty)]$ are in $Z_{r, \ol{p}}(X)$).  

In contrast to the definition of $A_{r, \ol p}(X)$, here we do \textit{not} require that for every $t \in \bb{A}^1 \subset \PP^1$, the cycle associated to the fiber $[g_i(t)]$ satisfies the perversity condition.  We allow the $g_i$'s to have ``bad" fibers, but we require that $\alpha$ and $\alpha'$ are related by ``good" fibers.  We could equivalently require that the condition of \cite[Prop.~2.4(3)]{intsing} is satisfied with $\bb{A}^1$ replaced by an open subset (containing at least two $k$-points) therein.
We let $A^w_{r, \ol{p}}(X)$ denote the group of $r$-cycles of perversity $\ol{p}$ modulo weak $\ol{p}$-rational equivalence.  
The main reason for introducing another equivalence relation is that our arguments ``naturally" construct pairings on the groups $A^w_{r, \ol p}(X)$.
Since there is a canonical morphism $A_{r, \ol{p}}(X) \to A^w_{r, \ol p}(X)$, we immediately obtain pairings on the groups $A_{r, \ol p}(X)$.  Note also the map $A_{r, \ol p}(X) \to IH^{\ol p}_{2r}(X, \bb{Z})$ factors through $A^w_{r, \ol p}(X)$: the proof of \cite[Prop.~2.5]{intsing} only requires $\cal W \into X \times \PP^1$ to have good fibers over $0, \infty \in \PP^1$, not all $t \in \PP^1$.

\medskip

\textbf{Resolutions.} Let $X$ be a variety over a field $k$ of characteristic zero.  The celebrated result of Hironaka asserts the singularities of $X$ can be resolved by a sequence of blowups along smooth centers which are normally flat in their ambient spaces, i.e., the resolution can be expressed as a composition $\pi: \widetilde{X} = X_f \to \cdots \to X_{n+1} \to X_n \to \cdots \to X = X_0$ where (for all $n = 0, \ldots, f-1$) $X_{n+1} = \Bl_{C_n} X_n$, $C_n \into X_n$ is smooth, and $X_n$ is normally flat along $C_n$ \cite{Hironaka}.  We will call a resolution admitting such an expression a \textit{strong resolution.}

\begin{definition} The exceptional locus $\widetilde{E} \into \widetilde{X}$ of the resolution is simply the preimage of the singular locus of $X$.  We say a subvariety $V \into \widetilde{X}$ is \textit{exceptional} if it is contained in the exceptional locus of the resolution.  We say an exceptional subvariety $V \into \widetilde{X}$ \textit{first appears} on $X_{n+1} = \Bl_{C_n}X_n$ if there is a subvariety $\ol{V} \into X_{n+1}$ such that the morphism $V \into \wt{X} \to X_{n+1}$ factors through a birational morphism $V \to \ol{V}$, and there is no such subvariety of $X_n$.  The concept of first appearance depends on the sequence of centers used to construct the resolution. \end{definition}

\begin{notation} If $C_n \into X_n$ is a center occurring in the resolution, let $\widetilde{E}_{C_n} \subseteq \widetilde{E}$ denote the union of those exceptional divisors $E$ for which the canonical morphism $E \to X$ factors through $C_n$, and let $\widetilde{E}_{\text{dom }C_n} \subseteq \widetilde{E}_{C_n}$ denote the union of those divisors for which the canonical morphism $E \to C_n$ is dominant.   \end{notation}

\begin{definition}[stratification via resolution] \label{strat res defn} 
We describe several ways a center $C_n$ may contribute to a stratification of the variety being resolved.
First we consider what happens ``below" a center $C_n$.  
\begin{BC}
\item If $C_n \to X$ has generic dimension $g$ over its $m$-dimensional image $W \into X$, then $W \into X^{d-m}$; and for each $e \geq 1$ the locus $W' \into W$ over which $C_n \to W$ has fiber dimension $\geq g+e$ is placed in $X^{d-m+e+1}$.  
\item If $g \geq 1$, the generic fiber of $C_n \to W$ is integral; the locus over which $C_n \to W$ fails to have an integral fiber of dimension $g$ is placed in $X^{d-m+1}$.
\item The singularities of $W$ are placed in $X^{d-m+1}$.
\end{BC}
Next we consider what happens ``above" a center.
\begin{AC}
\item For a general point $c \in C_n$, the number of irreducible components of ${(\widetilde{E}_{\text{dom }C_n})}_c$ is equal to the number of components of $\widetilde{E}_{\text{dom }C_n}$; where the former exceeds the latter is a closed set denoted $R_n \into C_n$.  
If the image $V$ of $R_n$ in $W$ is not dense, we place this image in $X^{d-m+e}$, where $e$ is the codimension of $V$ in $W$ (hence $d-m+e$ is the codimension of $V$ in $X$). 
\end{AC}
Let $\pi : \wt X \to X$ be a resolution expressed as a sequence of blowups along $C_n \into X_n$.  For any of the four constructions listed above, we say a stratification of $X$ \textit{satisfies condition (C) with respect to $\pi$} if it refines the stratification obtained by applying construction (C) to all of the centers $C_n$ occurring in the resolution.  \end{definition}

\begin{remark} Let $\pi : \wt X \to X$ be a resolution.  If a stratification satisfies (BC-1) with respect to $\pi$, then it refines the ``fiber dimension" stratification given by specifying $X^i$ is the closed subset along which the fibers of $\pi$ have dimension at least $i-1$.

A stratification satisfying (BC-1), (BC-2), (BC-3), and (AC) may not have the property that $X^i \setminus X^{i+1}$ is smooth since we do not, for example, necessarily place the incidences of the components of $X_{sing}$ in a smaller stratum.  For an explicit example in which the incidences of the components of the singular locus are not detected by the conditions above, see the projective example of Section \ref{need integral fibers}.

It would be very interesting to characterize intrinsically the strata which arise from the conditions described above.
\end{remark}

\medskip
\textbf{Error terms.} Suppose $\alpha \sim_{w, \ol{p}} \alpha'$ and \begin{equation} \label{alpha equiv} \alpha - \alpha' = \sum_{f_i \in k(S_i)} [f_i(0)] - [f_i(\infty)] \end{equation}
is an equation of rational equivalence in $Z_{r, \ol{p}}(X)$.  Viewing the $f_i$'s as rational functions on the proper transforms of the $S_i$'s, we obtain an equation of rational equivalence $\widetilde{\alpha} + e_\alpha - \widetilde{\alpha'} - e_{\alpha'} = \sum_{f_i \in k(\widetilde{S_i})} [f_i(0)] - [f_i(\infty)]$ in $Z_r(\widetilde{X})$, where $e_\alpha$ is exceptional for some $\widetilde{S_i} \to S_i$ and is supported over the image of the exceptional locus of $\widetilde{z} \to z$ for $z$ some term in the equation $\ref{alpha equiv}$ (and similarly for $e_{\alpha'}$).  In this situation we refer to $e_\alpha$ and $e_{\alpha'}$ as the ``error terms."
We write $e(\alpha, \alpha')$ for the union of the supports of the error terms, and typically $e_\alpha$ will denote a component of the support of $e(\alpha, \alpha')$.

\medskip
\textbf{Normality.} There is no loss of generality in assuming $X$ is normal.  For if $X$ is not necessarily normal and $\nu : X^\nu \to X$ is the normalization, and $\pi : \widetilde{X^\nu} \to X^\nu$ is the result of applying a resolution algorithm to $X^\nu$, we simply declare the stratification of $X$ to be the image via $\nu$ of the stratification of $X^\nu$ (induced by $\pi$), augmented by declaring $X^1$ is the singular locus of $X$.  Note that $\nu \circ \pi$ is probably not the result of applying a resolution algorithm to $X$ itself.

For $\alpha \in Z_{r, \ol{p}}(X)$, let $\alpha^\nu \in Z_r(X^\nu)$ denote its proper transform.  By definition, $\alpha \in Z_{r, \ol{p}}(X)$ implies $\alpha^\nu \in Z_{r, \ol{p}}(X^\nu)$.  If $\alpha \in Z_{r, \ol{p}}(X)$, then $\dim ( \nu^{-1}(\alpha \cap X^1)) = \dim (\alpha \cap X^1) = r-1$ since $\nu$ is finite and $\alpha$ is not contained in $X^1$ (since $p_1=0$).  Therefore $\ol{p}$-rational equivalence on $X$ implies $\ol{p}$-rational equivalence on $X^\nu$.  Let $\nu^* : A_{r, \ol{p}}(X) \to A_{r, \ol{p}}(X^\nu)$ denote the morphism induced by proper transform.

Now suppose we can define a pairing $A_{r, \ol{p}}(X^\nu) \times A_{s, \ol{q}}(X^\nu) \to A_{r+s-d}(X^\nu)$ when $\ol{p} + \ol{q} = \ol{t}$.  Then the composition
$$A_{r, \ol{p}}(X) \times A_{s, \ol{q}}(X) \xrightarrow{\nu^* \times \nu^*} A_{r, \ol{p}}(X^\nu) \times A_{s, \ol{q}}(X^\nu) \to A_{r+s-d}(X^\nu) \xrightarrow{\nu_*} A_{r+s-d}(X)$$
defines the pairing for $X$.

Moreover, the condition $p_1=0$ implies the generic point of any error term factors through $X^2$: we have $\dim (\alpha \cap X^1) \leq r-1$, and $\pi$ is finite over $X \setminus X^2$, hence $\dim (\pi^{-1} (\alpha \cap X^1 \setminus X^2)) \leq r-1$ and $\pi^{-1}(\alpha \cap X^1 \setminus X^2)$ cannot support any error terms.

If $X$ is not normal, the stratification ``prescribed by the resolution" is the one prescribed by $\widetilde{X^\nu} \to X^\nu \to X$.  We assume $X$ is normal and hence $X^2$ consists of the singular locus of $X$.

\medskip
The following proposition demonstrates our proposal is sensible when the ``resolution" is the blowup of a smooth variety along a smooth subvariety.  The method of proof also shows the condition $\ol p + \ol q \leq \ol t$ cannot in general be weakened.

\begin{proposition} \label{smooth case}
Let $Y \into X$ be a closed immersion of smooth varieties of codimension $c$.  Let $\alpha, \beta \into X$ be cycles of dimensions $r,s$ satisfying
$$\dim(\alpha \cap Y) \leq r-c+p \ , \ \dim(\beta \cap Y) \leq s-c+q,$$
with $p+q \leq c-1$.  Let $\pi: \wt X \to X$ be the blowup of $X$ along $Y$.  Then $\alpha \cdot \beta = \pi_*( \wt \alpha \cdot \wt \beta)$ as cycle classes on
$|\alpha| \cap |\beta|$ and hence on $X$.
\end{proposition}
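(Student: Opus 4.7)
The plan is to expand $\wt\alpha \cdot \wt\beta$ against the Gysin pullbacks $\pi^*\alpha$ and $\pi^*\beta$, use the projection formula and $\pi_*\pi^* = \id$ (both valid since $\pi \colon \wt X \to X$ is proper birational between smooth varieties) to isolate an ``exceptional'' correction term, and then exploit the projective bundle structure of $E = \pi^{-1}(Y) \cong \mathbb{P}(N_{Y/X})$ together with the inequality $p + q \leq c - 1$ to show this correction vanishes.

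Set $\eta_\alpha := \pi^*[\alpha] - [\wt\alpha]$ and $\eta_\beta := \pi^*[\beta] - [\wt\beta]$; each $\eta_\alpha$ is an exceptional $r$-cycle on $\wt X$ supported on $\pi^{-1}(|\alpha| \cap Y) \subseteq E$, and applying $\pi_*$ (together with $\pi_*\wt\alpha = \alpha = \pi_*\pi^*\alpha$) gives $\pi_*\eta_\alpha = 0$. Expanding $\wt\alpha \cdot \wt\beta = (\pi^*\alpha - \eta_\alpha)(\pi^*\beta - \eta_\beta)$, pushing forward, and invoking the projection formula for the cross terms along with multiplicativity of $\pi^*$ for the leading term, I obtain
\[
\pi_*(\wt\alpha \cdot \wt\beta) = \alpha \cdot \beta + \pi_*(\eta_\alpha \cdot \eta_\beta),
\]
so it suffices to show $\pi_*(\eta_\alpha \cdot \eta_\beta) = 0$ as a cycle class on $|\alpha| \cap |\beta|$. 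When $p = 0$ (resp.\ $q = 0$) this is immediate because $\eta_\alpha$ (resp.\ $\eta_\beta$) is an $r$-cycle (resp.\ $s$-cycle) whose support lies in a set of dimension at most $r - 1$ (resp.\ $s - 1$) and hence must vanish; I therefore assume $p, q \geq 1$.

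Write $\eta_\alpha = j_* \eta'_\alpha$ via $j \colon E \hookrightarrow \wt X$, and similarly for $\beta$. Since $N_{E/\wt X} \cong \mathcal{O}_E(-1)$ on $E = \mathbb{P}(N_{Y/X})$, the self-intersection formula gives
\[
\eta_\alpha \cdot \eta_\beta = -j_*\bigl(\eta'_\alpha \cdot \xi \cdot \eta'_\beta\bigr),
\]
where $\xi = c_1(\mathcal{O}_E(1))$. Next I apply the projective bundle formula to the restricted bundle $\pi|_E^{-1}(W_\alpha) \to W_\alpha$, with $W_\alpha := |\alpha| \cap Y$ of dimension at most $r - c + p$: there exist classes $a_i \in A_{r - c + 1 + i}(W_\alpha)$ with
\[
\eta'_\alpha = \sum_{i=0}^{p - 1} \pi|_E^*(a_i) \cdot \xi^i,
\]
the truncation at $i \leq p - 1$ being forced by $\dim W_\alpha \leq r - c + p$. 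An analogous decomposition $\eta'_\beta = \sum_{j=0}^{q-1} \pi|_E^*(b_j) \cdot \xi^j$ holds. Multiplying out and pushing forward along the $\mathbb{P}^{c-1}$-bundle $\pi|_E$, each resulting term is a multiple of $(\pi|_E)_* \xi^{i + j + 1}$; since $i + j + 1 \leq p + q - 1 \leq c - 2 < c - 1$, every such pushforward vanishes by the standard formula for projective bundle pushforward, and $\pi_*(\eta_\alpha \cdot \eta_\beta) = 0$.

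The main obstacle is to realize every step in the refined group $A_{r + s - d}(|\alpha| \cap |\beta|)$ rather than merely in $A_*(X)$: the Gysin expansion, the decomposition $\eta_\alpha = j_* \eta'_\alpha$, and the projective bundle decomposition must each be carried out with supports tracked, so that the final vanishing is witnessed inside $A_{r + s - d}(|\alpha| \cap |\beta| \cap Y)$. As a byproduct, the computation shows the inequality $p + q \leq c - 1$ is sharp: when $p + q = c$ the exponent $i + j + 1$ can reach $c - 1$, at which point $(\pi|_E)_* \xi^{c-1} = [Y]$ produces a nonzero residue $a_{p-1} \cdot_Y b_{q-1}$, matching the elementary discrepancy seen in, e.g., two distinct planes through a line in $\mathbb{P}^3$.
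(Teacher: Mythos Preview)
Your argument is correct and rests on the same core computation as the paper---the projective bundle description of $A_*(E)$ and the vanishing $(\pi|_E)_*\xi^n = 0$ for $n \leq c-2$---but your bookkeeping is a bit more economical. The paper expands $\pi^*\alpha \cdot \pi^*\beta = (\wt\alpha + e_\alpha)(\wt\beta + e_\beta)$ and then must show that each of the three terms $\wt\alpha \cdot e_\beta$, $e_\alpha \cdot \wt\beta$, $e_\alpha \cdot e_\beta$ is annihilated by $\pi_*$; the two cross terms are handled by writing $j^*\wt\alpha$ as a polynomial in $z$ of degree $\leq p$ and $e_\beta$ as one of degree $\leq q-1$, so the product has $z$-degree $\leq p+q-1 \leq c-2$. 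You instead expand $\wt\alpha \cdot \wt\beta = (\pi^*\alpha - \eta_\alpha)(\pi^*\beta - \eta_\beta)$, so that the cross terms $\pi^*\alpha \cdot \eta_\beta$ and $\eta_\alpha \cdot \pi^*\beta$ die under $\pi_*$ by the projection formula alone (at the refined level this uses that $\pi_*\eta_\alpha$ is an $r$-class supported on $|\alpha|\cap Y$, which has dimension $< r$), leaving only $\eta_\alpha \cdot \eta_\beta$ to treat via the projective bundle. The paper's route has the minor advantage of making explicit the $z$-degree bound on $j^*\wt\alpha$ itself, but your route reaches the same sharp threshold $p+q \leq c-1$ with one computation instead of three.
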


\begin{proof} We have a cartesian diagram:

$$\xym{ E \ar[r]^-j \ar[d]_-g & \wt X\ar[d]^-\pi \\ Y \ar[r] & X .\\}$$
Let $z \in A_*(E)$ denote the class of the canonical $g$-ample line bundle.  Then $A_*(E)$ is $A_*(Y) [z]$ modulo a relation involving the Chern classes of the normal bundle $N_YX$; we will need that $g^*$ is a ring homomorphism, and that $g_* (z^n) = 0$ for $n \leq c-2$.

 We set $\pi^* \alpha = \wt \alpha + e_\alpha$ and $\pi^* \beta = \wt \beta + e_\beta$ in $A_*(\wt X)$; precise formulas for $e_\alpha, e_\beta$ are available \cite[Thm.~6.7]{Ful}, but we will just need that $\pi_* e_\alpha = \pi_*e_\beta=0$.  Since $\alpha \cdot \beta = (\pi_* \pi^* \alpha) \cdot \beta = \pi_* (\pi^* \alpha \cdot \pi^* \beta)$, it suffices to show the terms $\wt \alpha \cdot e_\beta, e_\alpha \cdot \wt \beta,$ and $e_\alpha \cdot e_\beta$ vanish after $\pi_*$ is applied.

Now we analyze the term $\wt \alpha \cdot e_\beta$; the treatment of the term $e_\alpha \cdot \wt \beta$ merely involves interchanging the roles of $\alpha$ and $\beta$.  We utilize the Chow rings of $Y$ and $E$ and show the vanishing of $g_* ( j^* (\wt \alpha) \cdot e_\beta )$, which implies $\pi_* (\wt \alpha \cdot e_\beta) = 0$.  Since $j^* (\wt \alpha)$ is an $(r-1)$-dimensional cycle supported over the ($\leq (r-c+p)$-dimensional) set $\alpha \cap Y$, it can be expressed as
$$j^* (\wt \alpha) = z^p g^*( a_{r-c+p} ) + z^{p-1}  g^* (a_{r-c+p-1} ) + \cdots +  g^*( a_{r-c} )$$
where $a_i$ is a class in $A_i(Y)$.  Similarly, since $e_\beta$ is an $s$-dimensional cycle supported over $\beta \cap Y$, we have an expression
$$e_\beta = z^{q-1} g^* (b_{s-c+q}) + z^{q-2} g^* (b_{s-c+q-1}) + \cdots + g^* ( b_{s-c+1})$$
with $b_i \in A_i(Y)$.  Therefore $ j^* (\wt \alpha) \cdot e_\beta$ is a sum of terms of the shape $z^n g^* ( c_n )$ (with $c_n \in A_{r+s-d+1+n-c}(Y)$) with $n \leq p+q -1$.  Now the projection formula for $g$, the formula $g_* (z^n) = 0$ for $n \leq c-2$, and the hypothesis $p+q \leq c-1$ together imply that $g_* ( j^* (\wt \alpha) \cdot e_\beta) =0 $, as desired.

To show the vanishing of the term $e_\alpha \cdot e_\beta$, we note that
$$e_\alpha \cdot_{\wt X} e_\beta =  j_* ( c_1(N_{E} \wt X) \cap (e_\alpha \cdot_E e_\beta) ),$$
where we have used a subscript to indicate on which variety we calculate the product.  The largest $z$-degree in $e_\alpha \cdot_E  e_\beta$ is $p+q-2$, and $ c_1(N_{E} \wt X) = -z$, so the same argument as in the previous paragraph shows the vanishing of $g_* ( c_1(N_{E} \wt X) \cap (e_\alpha \cdot e_\beta) )$, hence $\pi_* (e_\alpha \cdot_{\wt X} e_\beta) = 0$.
\end{proof}

\begin{remark}
There does not seem to be a simple inductive argument which allows one to conclude Proposition \ref{smooth case} holds for a composition of blowups along smooth centers $\pi: X_2 = \text{Bl}_{Y_1}(X_1) \xrightarrow{\pi_2} X_1 = \text{Bl}_{Y_0}(X_0) \xrightarrow{\pi_1} X_0 =X$ for the following reason: if cycles on $X_0$ have controlled incidence with $Y_0$, it is not necessarily the case that their proper transforms via $\pi_1$ will have suitably controlled incidence with $Y_1 \into X_1$.
For example, suppose $Y_0$ is a linearly embedded $\PP^2$ in $X_0 = \PP^4$, and $Y_1$ is the preimage ${(\pi_1)}^{-1}(L)$ of a line $L \into Y_0$ via the blowup.
Then $Y_1 \to L$ is a smooth morphism between smooth varieties, and has geometrically integral fibers.  The only sensible stratification is
$$\emptyset = X^4 \into L = X^3 \into Y_0 = X^2 = X^1 \into X ;$$
since no lines are distinguished and no line has a distinguished point, there is no canonical way to define a non-empty finite set $X^4$.  Now if $S \into X$ satisfies $\dim (S \cap X^2) = \dim (S \cap X^3) = 0$, then $S$ satisfies the perversity condition $ \ol p = (0, 1, 1)$ (and hence also the condition $\ol q = (1,1,2)$).  Therefore the pair $(S,S)$ satisfies a pair of complementary perversity conditions.   However, the proper transform $S_1 \into X_1$ satisfies $\dim (S_1 \cap Y_1) = 1$, so that $(S_1, S_1)$ violates the condition $p + q \leq 1$ for the morphism $\pi_2$ required by Proposition \ref{smooth case}.

The self-intersection $S \cdot S$ agrees with $\pi_* (S_2 \cdot S_2)$,
but the ``reason" involves a feature of the morphism $X_2 \to X_0$ which cannot be seen from its constituent factors: the ``error term" $\pi^* (S) - S_2$ is represented by surfaces supported over the finite set $S \cap X^3$, and these can be moved away from each other (and from $S_2 \cap \pi^{-1}(S \cap X^3)$) via rational functions on $L$.
In fact the same procedure works in the singular case, with possible modifications due to the more complicated geometry; see the penultimate paragraph of the proof of Proposition \ref{intro smooth ex} for details.
\end{remark}

\section{Intersection with divisors} \label{sec:divisors} In this section we construct a pairing between divisors and $r$-cycles (for $r \geq 1$) by using a stratification obtained from a resolution of singularities.  We make use of the ring structure on $A_* (X)$ for nonsingular $X$, and we use that products respect supports in the following sense: if $A, B \into  X$ are cycles of dimensions $r,s$, then $A \cdot B \in A_{r+s-d}( |A | \cap | B|)$ is a well-defined cycle class \cite[Ch.~8]{Ful}.  Often we use that $|A| \cap |B|$ (or its image via a resolution) cannot support a cycle of the relevant dimension.

We use $(-)^i$ to indicate the intersection $|(-)| \cap |X^i|$ in the remainder of the paper.

\begin{theorem} \label{divisor pairing}
Let $X$ be a $d$-dimensional variety over $k$, and let $\pi : \widetilde{X} \to X$ be a strong resolution of singularities.  
Suppose a stratification of $X$ satisfies (BC-1) and (AC) with respect to $\pi$.
Let $\oline{p}, \oline{q}$ be perversities such that $\oline{p} + \oline{q} = \oline{t}$.
Then the assignment
$$(D, \alpha) \mapsto \pi_* (\widetilde{D} \cdot \widetilde{\alpha})$$
determines a well-defined pairing
$$A^w_{d-1, \oline{p}}(X)_\bb{Q} \otimes A^w_{r, \oline{q}}(X)_\bb{Q} \to A_{r-1}(X)_\bb{Q}$$
for any $r \geq 1$.
\end{theorem}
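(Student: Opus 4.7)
My plan is to verify well-definedness by separately checking invariance under weak $\ol p$-rational equivalence in $D$ (for fixed $\alpha$) and weak $\ol q$-rational equivalence in $\alpha$ (for fixed $D$). Since $\widetilde X$ is smooth, $\widetilde D \cdot \widetilde \alpha$ is a well-defined class in $A_{r-1}(|\widetilde D| \cap |\widetilde \alpha|)$ and pushforward along the proper map $\pi$ makes sense.

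\textbf{Invariance in $D$.} Suppose $D \sim_{w, \ol p} D'$. As the subvarieties $W_i$ have dimension $d$, each $W_i = X$ and the equation reads $D - D' = \sum_i \divfcn(g_i)$ for rational functions $g_i$ on $X$. Pulling back: $\sum_i \divfcn(\tilde g_i) = (\widetilde D - \widetilde{D'}) + E$, rationally trivial on $\widetilde X$, where $E = \sum_j m_j E_j$ is a Weil divisor supported on the irreducible exceptional components $E_j$. Thus $\widetilde D \cdot \widetilde \alpha - \widetilde{D'} \cdot \widetilde \alpha = -E \cdot \widetilde \alpha$ in $A_{r-1}(\widetilde X)$, and it remains to show $\pi_*(E \cdot \widetilde \alpha) = 0$. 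The coefficient $m_j = \sum_i \mathrm{ord}_{E_j}(\tilde g_i)$ is nonzero only when some $\tilde g_i$ has a zero or pole along $E_j$, i.e., when $V_j := \pi(E_j) \subseteq |D| \cup |D'|$. Assuming $V_j \subseteq D$, (BC-1) places $V_j \subseteq X^{d - \dim V_j}$, and the perversity of $D$ then gives $\dim V_j \leq \dim(D \cap X^{d - \dim V_j}) \leq \dim V_j - 1 + p_{d - \dim V_j}$, forcing $p_{d - \dim V_j} \geq 1$. Combining with $\ol p + \ol q = \ol t$ yields $\dim(\alpha \cap X^{d - \dim V_j}) \leq r - 1 - p_{d - \dim V_j} \leq r - 2$. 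Hence $\pi_*(E_j \cdot \widetilde \alpha)$, supported in $V_j \cap \alpha$ of dimension at most $r - 2$, vanishes in $A_{r-1}(X)$.

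\textbf{Invariance in $\alpha$.} An analogous manipulation gives $\widetilde \alpha - \widetilde{\alpha'} \sim_{rat} -F$ on $\widetilde X$ for an $r$-cycle $F$ supported in the exceptional locus; each irreducible component $T$ of $F$ has $\pi(T)$ of dimension $< r$ and contained in $z \cap X^2$ for some term $z \in \{[g_k(0)], [g_k(\infty)]\}$ (one may resolve indeterminacies of $g_k$ via further blow-up of $\widetilde X$ while preserving smoothness). To show $\pi_*(\widetilde D \cdot F) = 0$ in $A_{r-1}(X)_\QQ$, use the $\QQ$-coefficients: treating $D$ as $\QQ$-Cartier (after clearing denominators or $\QQ$-factorializing $X$ as needed), write $\widetilde D = \pi^* D - e_D$ in $A_{d-1}(\widetilde X)_\QQ$, where $e_D$ is a $\QQ$-exceptional divisor whose nonzero components $E_j$ again satisfy $V_j \subseteq D$ and hence $p_{d - \dim V_j} \geq 1$. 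Each component of $F$ is exceptional with image of dimension $< r$, so $\pi_* F = 0$, and by the projection formula $\pi_*(\pi^* D \cdot F) = D \cdot \pi_* F = 0$, whence $\pi_*(\widetilde D \cdot F) = -\pi_*(e_D \cdot F)$. For each $E_j$ appearing in $e_D$ and each term $z$, $V_j \cap z \subseteq X^{d - \dim V_j} \cap z$ has dimension at most $r - 1 - p_{d - \dim V_j} \leq r - 2$ by the perversity of $z$. Hence $\pi_*(E_j \cdot F)$ is supported in $V_j \cap \pi(|F|)$ of dimension at most $r - 2$ and vanishes in $A_{r-1}(X)_\QQ$.

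\textbf{Main obstacle.} The hard part is the $\alpha$-invariance: while the $D$-side reduces to a direct dimension count exploiting the perversity of $D$, the $\alpha$-side requires the auxiliary decomposition $\widetilde D = \pi^* D - e_D$ on $\widetilde X$. This presupposes $\QQ$-Cartier-ness of $D$ on the normal singular $X$, which motivates the $\QQ$-coefficients in the statement. Secondary concerns are resolving indeterminacies of the functions $g_k$ within the $\widetilde X$-framework and the careful multiplicity tracking needed to identify which exceptional components actually contribute to $E$ (respectively $e_D$, $F$).
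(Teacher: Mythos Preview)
Your argument for invariance in $D$ is essentially the paper's: the error divisor $e_D$ is supported over strata with $p_i \geq 1$, whence $q_i \leq i-2$ and $\pi_*(e_D \cdot \wt\alpha)$ is an $(r-1)$-cycle supported in dimension $\leq r-2$. That part is fine.

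The genuine gap is in your treatment of invariance in $\alpha$. You write $\wt D = \pi^* D - e_D$ in $A_{d-1}(\wt X)_\bb{Q}$, but $\pi^* D$ only makes sense when $D$ is $\bb{Q}$-Cartier on $X$, and the hypotheses give no such thing: $X$ is only assumed normal, not $\bb{Q}$-factorial. Your proposed fixes do not work. ``Clearing denominators'' is irrelevant, since the obstruction is not torsion in the class group but the failure of $D$ to be locally principal. ``$\bb{Q}$-factorializing $X$'' replaces $X$ by a small modification $X' \to X$, which changes the variety on which the pairing is to be defined (and there is no reason the chosen resolution $\wt X$ factors through $X'$). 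A symptom of the gap is that your argument never invokes condition (AC), which is an explicit hypothesis of the theorem and is not needed anywhere in the $D$-invariance step.

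The paper's proof of $\alpha$-invariance is genuinely different and does not pull $D$ back through $\pi$. After dispatching the case $q_2=0$ by a fiber-dimension count, it treats the case $p_2=0$ as follows. Take an integral error component $e = e_\alpha$ and locate where it \emph{first appears} in the tower, say on $X_{n+1} = \Bl_{C_n} X_n$; choose an exceptional component $E \supset e$ with $\pi_E : E \to C_n$ dominant. The crucial observation is that while $D$ need not be $\bb{Q}$-Cartier on $X$, the center $C_n$ is \emph{smooth}, so $|D_n \cap C_n|$ supports an honest Cartier divisor on $C_n$ with line bundle $M \in \pic(C_n)$. Condition (AC) ensures (away from a reducible-fiber locus $R_n$ already placed in a deeper stratum, which is handled separately) that the generic points of $\pi_E^{-1}|D_n \cap C_n|$ agree with those of $|\wt D \cap E|$, forcing $i^* \cal O_{\wt X}(\wt D)$ and $\pi_E^* M$ to be proportional in $\pic(E)_\bb{Q}$. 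The projection formula for $\pi_E$, together with $(\pi_E)_* e = 0$, then yields $\pi_*(\wt D \cdot e) = 0$ in $A_{r-1}(X)_\bb{Q}$. This proportionality in $\pic(E)_\bb{Q}$ is the actual source of the $\bb{Q}$-coefficients in the statement, not any $\bb{Q}$-Cartier hypothesis on $D$.
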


\begin{proof} First we show the assignment is compatible with $\ol{p}$-rational equivalences of divisors.  If $D$ and $D'$ are divisors with $D \sim_{w, \oline{p}} D'$, then in particular $D - D' = \divfcn (f)$ for some $f \in k(X)$.  Viewing instead $f \in k (\widetilde{X})$, we find $\widetilde{D} - \widetilde{D'} = e_D + \divfcn(f)$, where the error term $e_D$ is an exceptional divisor for $\pi$, and is supported over $(D \cup D') \cap X^2$.  Since $\widetilde{\alpha}$ is not contained in the exceptional locus, the intersection $\widetilde{\alpha} \cap e_D$ is proper.

Since a stratification satisfying (BC-1) refines the fiber dimension stratification, the preimage $\pi^{-1}(T)$ of a divisor $T \into X^i \setminus X^{i+1}$ has dimension no larger than $(d-i-1)+(i-1) = d-2$.  Therefore $e_D$ is supported over the strata for which $p_i \geq 1$.   For these strata, $q_i \leq i-2$, so $\dim (\alpha^i) \leq r -2$.  But now $\pi_* ( e_D \cdot \widetilde{\alpha})$ is an $(r-1)$-cycle supported in a subscheme of dimension $r-2$, hence $\pi_* ( e_D \cdot \widetilde{\alpha}) = 0$.

The vanishing of $\pi_* (e_D \cdot e_\alpha)$ is proved by a similar argument: if $e_\alpha \not \subseteq e_D$, then an identical argument applies.  If $e_\alpha \subseteq e_D$, then $e_\alpha$ is supported over a subscheme of dimension at most $r-2$ (since it is supported over those strata for which $q_i \leq i-2$), so that $e_D \cdot e_\alpha$ is represented by a cycle having positive generic dimension over its image.

Now we show the assignment is compatible with $\ol{q}$-rational equivalences of $r$-cycles, i.e., the vanishing $\pi_* (\wt D \cdot e_\alpha)= 0$.  If $\alpha \sim_{w, \oline{q}} \alpha'$, then $\widetilde{\alpha} - \widetilde{\alpha'} \in A_r(\widetilde{X})$ is represented by a cycle which is supported over $A \cap X^2$, where $A$ is an $r$-cycle of perversity $\oline{q}$.  (Namely $A$ consists of the union of the supports of the terms appearing in the equation of rational equivalence, i.e., the image via $\pi$ of $e(\alpha, \alpha')$.)

First assume $q_2 = 0$, so $\alpha$ and $\alpha'$ meet the singular locus $X^2$ properly.  Then any component $e_\alpha$ of $e(\alpha, \alpha)$ is an $r$-cycle having generic dimension at least $2$ over its image, and this image is contained in the (at most) $(r-2)$-dimensional set $A \cap X^2$.  Therefore $ \wt D \cdot e_\alpha$ is represented by an $(r-1)$-cycle having generic dimension at least $1$ over its image, and so $\pi_* ( \wt D \cdot e_\alpha) =0$.

Finally assume $p_2 = 0$; this is the most interesting case.  We may assume $e := e_\alpha$ is integral and first appears on $X_{n+1} = \Bl_{C_n} X_n$.  Let $\ol{e} \into X_{n+1}$ denote the ($r$-dimensional) image of $e$, and let $e_n \into C_n \into X_n$ denote the image of $e$.  Since $e$ first appears on $X_{n+1}$, there is an exceptional divisor $E'$ for the morphism $X_{n+1} \to X_n$ which contains $\ol e$ and dominates $C_n$.  Since $e \to e_n$ is birational, there exists a component $E \subseteq \wt E \into \wt X$ which contains $e$ and maps birationally onto $E'$.  (Since  the exceptional locus of $X_{n+1} \to X_n$ is flat over $C_n$, every exceptional component of $X_{n+1} \to X_n$ must dominate $C_n$.)
Let $R_n \into C_n$ denote the locus determined by the reducible fibers over $C_n$ as described in condition (AC) in Definition $\ref{strat res defn}$; we will use that $R_n$ contains the reducible fiber locus of $E \to C_n$.  Let $V \into W$ denote the image of $R_n$ in $X$.  

Note that $\dim (e_n) \leq r-1$, hence if $e_n$ has positive generic dimension over its image in $X^2$, then we may proceed as in the case $q_2=0$.  So we may assume $e_n$ is generically finite over its image $\ol{e_n} \into W \into X$.  Here $W$ denotes the image of $C_n$; say $\dim W = d-c$, so that $W$ is a component of $X^c \into X^2$.  We have $\dim ( \ol{e_n}) = r-1 = (r-c) + (c-1)$, hence $q_c = c-1$ and $p_c = 0$, so $D$ meets $W$ properly.

Let $D_n \into X_n$ denote the proper transform of $D$ via $X_n \to X$, and $\wt{D} \into \wt{X}$ its proper transform via $\wt{X} \to X$.  Since $D$ does not contain $W$, $D_n$ does not contain $C_n$ and $\wt{D}$ does not contain $E$.
The situation is summarized by the following commutative diagram:

$$\xym{ & e \ar[r] \ar[d] & E \ar[r]^-i \ar[d]_{\pi _E } & \widetilde{X} \ar[d]^-\pi  & \wt{D} \ar[l] \ar[d] \\
R_n \ar@/_1pc/[rr]|\hole  \ar[d] & e_n \ar[r] \ar[d] & C_n \ar[d] \ar[r]^-{i_n}  & X_n \ar[d] & D_n \ar[l] \ar[d] \\
V \ar@/_1pc/[rr] & \ol{e_n} \ar[r] &  W \ar[r] & X & D \ar[l]  }$$

\medskip

We assume that $R_n$ is not contained in $D_n$, and later we handle the case $D_n \supseteq R_n$ by a separate argument.  
A priori we have  $\pi_E^{-1} |D_n \cap C_n| \supseteq | \widetilde{D}  \cap E |$, but $D_n \not \supset R_n$, together with the condition on fiber integrality, implies the generic points of $\pi_E^{-1} |D_n \cap C_n|$ coincide with those of $|\widetilde{D} \cap E|$. 
Since $C_n$ is smooth, $D_n \cap C_n$ is the support of a Cartier divisor in $C_n$.  Set $M := \cal{O}_{C_n} (|D_n \cap C_n|) \in \pic(C_n)$, and $L := \cal{O}_{\widetilde{X}} (\widetilde{D})  \in \pic(\widetilde{X})$.  The coincidence of the generic points implies $i^*L$ and ${\pi_E}^* M$ are rational multiples of one another in $\pic (E)_\bb{Q}$; say $m \cdot {\pi_E}^* M = i^* L$ for some $m \in \bb{Q}$.

Since ${\pi_E}_* (e) = 0$, the projection formula for $\pi_E$ implies
\begin{equation} \label{push 0} {\pi_E}_* (c_1 ( {\pi_E}^*(M)) \cdot e) = c_1(M) \cdot {\pi_E}_*(e) = 0. \end{equation}
The projection formula for $i$ implies the equality of cycle classes
\begin{equation} \label{proj i} i_* ( c_1 ( i^* L ) \cdot e) = c_1(L ) \cdot e \in A_{r-1}(\widetilde{X}). \end{equation}
The functoriality of proper push-forward and the relation in $\pic (E)_\bb{Q}$ imply
\begin{equation} \label{last one} \pi_* i_* ( c_1 ( i^* L ) \cdot e)  = {i_n}_* {\pi_E}_*  ( c_1 ( i^* L ) \cdot e) =  m \cdot {i_n}_* {\pi_E}_*  ( c_1({ \pi_E}^* ( M )) \cdot e). \end{equation}
These equations together imply the vanishing $\pi_* ( c_1(L) \cdot e) = 0$ in $A_{r-1}(X)_\bb{Q}$.

We return to the case in which $D_n \supseteq R_n$.  Then $D$ contains the image $V$ of $R_n$ in $X$, and $V$ is a component of $X^{d-c+m}$ (for some $m \geq 1$, since $p_c=0$).  Then $p_{d-c+m} \geq 1$, hence $q_{d-c+m} \leq d-c+m -2$ and therefore $\dim (A \cap V) \leq r-2$.
We have a commutative diagram relating push-forward and pull-back
$$\xym{A_{r-1} (\wt{X}) \ar[d] \ar[r]^-{\pi_*} & A_{r-1}(X) \ar[d]^-\cong \\
A_{r-1} (\wt{X} \setminus \pi^{-1} | A \cap V| ) \ar[r] & A_{r-1} (X \setminus | A \cap V|) }$$
so it suffices to show $\pi_* ( \wt{D} \cdot e) $ vanishes upon restriction to $X \setminus | A \cap V|$.  
Away from $V$, the generic points of $\pi_E^{-1} |D_n \cap C_n|$ coincide with those of $|\widetilde{D} \cap E|$ (as in the case $R_n \not \subset D_n$), and the vanishing of $\pi_* (\wt D \cdot e)$ in $A_{r-1}(X)$ follows.
\end{proof}
\begin{remark} Outside of the case $p_2=0$, one can work with the coarser fiber dimension stratification, and with integral coefficients.  \end{remark}

\section{Intersections on a variety with one-dimensional singular locus} \label{sec:onedimsing}

In this section we show a resolution may be used to defined intersection pairings on a variety with one-dimensional singular locus.  At one place we need the generic smoothness of a morphism between smooth integral $k$-schemes, so our results in this section are aimed at the case $\charct k = 0$, though they apply to special situations in positive characteristic.  We do not require a strong resolution, but we require the smoothness of the components of the exceptional locus $\wt E \into \wt X$.  Resolutions as in Theorem $\ref{one dim sing}$ exist by \cite[Thm.~1.6(2)]{BierMil}, or, since we do not use the smoothness of the centers, \cite{Kollar:res}.

\medskip
\textbf{The stratification.} Let $X$ be a variety over $k$.  Suppose $\pi : \wt X \to X$ is a resolution of the singularities of $X$ such that all exceptional components are smooth over $k$, and generically smooth over their images in $X$.  We set $X^{d-1}$ equal to the singular locus of $X$, and we define $X^d \into X^{d-1}$ to be the smallest set with the following properties:
\begin{enumerate}
\item $X^d$ contains the singularities of $X^{d-1}$, i.e., contains the singularities of each component of $X^{d-1}$, and contains the points at which the components intersect;
\item $X^d$ contains the image of every exceptional divisor $E \subseteq \wt E \into \wt X$ that is contracted to a point by the resolution $\pi : \wt X \to X$; and
\item for every exceptional divisor $E$ with 1-dimensional image $E_1$, suppose the morphism $E \to E_1$ has smooth generic fiber (e.g., $\charct k =0$); then $X^d$ contains the image of the singular fibers of $E \to E_1$.
\end{enumerate}

\begin{remark} 
Condition (1) is a mild strengthening of condition (BC-3) of Definition \ref{strat res defn}, since we additionally take into account the incidences of the components of $X^{d-1}$.  Condition (BC-1) implies condition (2) must be satisfied.  Condition (3) above is a strengthening of condition (AC).
\end{remark}

\begin{theorem} \label{one dim sing} Suppose $X$ is a variety over a field $k$ such that $\dim(X_{sing}) =1$, and suppose $\pi : \wt X \to X$ is a resolution of singularities such that the exceptional components are 
\begin{itemize}
\item$k$-smooth, and 
\item generically smooth over their images in $X$.
\end{itemize}
Let $\ol p, \ol q$ be perversities such that $\ol p + \ol q = \ol t$.
With respect to the stratification defined above, the assignment $\alpha, \beta \mapsto \pi_* ( \wt \alpha \cdot \wt \beta)$ determines a well-defined pairing $A^w_{r, \ol p}(X) \otimes A^w_{s, \ol q}(X) \to A_{r+s-d} (X)$.  \end{theorem}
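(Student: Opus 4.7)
My plan is to adapt the proof strategy of Theorem \ref{divisor pairing}. Well-definedness reduces to establishing the three vanishings
\[
\pi_*(e_\alpha \cdot \wt\beta) = 0, \quad \pi_*(\wt\alpha \cdot e_\beta) = 0, \quad \pi_*(e_\alpha \cdot e_\beta) = 0
\]
in $A_{r+s-d}(X)$, where $e_\alpha, e_\beta$ are the error terms associated to weak rational equivalences $\alpha \sim_{w, \ol p} \alpha'$ and $\beta \sim_{w, \ol q} \beta'$. I treat each irreducible component $e$ of $e_\alpha$ separately: such a component is an $r$-cycle contained in a single smooth exceptional component $E \subseteq \wt E$ of dimension $d-1$, and $\pi(e) \subseteq z_\alpha \cap X^{d-1}$ for some term $z_\alpha$ of perversity $\ol p$ in the rational equivalence. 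Using the smoothness of $E$ and the projection formula for $i_E : E \into \wt X$, I compute $e \cdot \wt\beta = i_{E*}(e \cdot_E i_E^! \wt\beta)$, so $\pi_*(e \cdot \wt\beta)$ is supported in $z_\alpha \cap \beta \cap X^{d-1}$.

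The perversity inequalities $\dim(z_\alpha \cap X^{d-1}) \leq r-d+1+p_{d-1}$ and $\dim(\beta \cap X^{d-1}) \leq s-d+1+q_{d-1}$, combined with complementarity $p_{d-1}+q_{d-1}=d-2$, force the supporting set to have dimension at most $r+s-d$, with equality only in the critical case $r+s=d$, $p_{d-1}=s-1$, $q_{d-1}=r-1$. Outside this critical case, $\pi_*(e \cdot \wt\beta)$ vanishes on dimensional grounds, or because the cycle dimension $r+s-d$ is negative.

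In the critical case $r+s=d$, both $z_\alpha \cap X^{d-1}$ and $\beta \cap X^{d-1}$ are zero-dimensional, and the analogous perversity bound on $X^d$ using $p_d+q_d=d-1$ rules out any contribution from $X^d$. The pushforward is therefore a $0$-cycle supported at finitely many points $p \in X^{d-1} \setminus X^d$, at which condition (3) of the stratification ensures the fiber $E_p := E \cap \pi^{-1}(p)$ is smooth of dimension $d-2$; the morphism $E \to C$ is smooth at $E_p$, so the normal bundle $N_{E_p/E}$ is the pullback of $T_pC$ and hence trivial. Since $e$ lies in $E_p$ and the components of $\wt\beta|_E$ meeting $e$ are also supported in $E_p$ (because $\wt\beta|_E$ projects into the finite set $\beta \cap C$), the self-intersection formula
\[
i_{E_p *}(\xi) \cdot_E i_{E_p *}(\eta) = i_{E_p *}\bigl(\xi \cdot_{E_p} \eta \cdot c_1(N_{E_p/E})\bigr) = 0
\]
forces $e \cdot_E \wt\beta|_E = 0$, so $\pi_*(e \cdot \wt\beta) = 0$.

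The cross term $\pi_*(e_\alpha \cdot e_\beta)$ is handled by the same mechanism: for components $e \subseteq E_1$ and $e' \subseteq E_2$ both lying above a common critical point $p$, applying the projection formula along $i_{E_2}$ and then the self-intersection formula along $E_{2,p} \into E_2$ (whose normal bundle is likewise trivial) yields the vanishing. The main obstacle is the critical-case analysis at $r+s=d$, and the essential new ingredient is condition (3) of the stratification, which guarantees that the relevant fibers $E_p$ are smooth with trivial normal bundle at points of $X^{d-1} \setminus X^d$; this reduces the apparent difficulty to an application of the self-intersection formula with $c_1=0$.
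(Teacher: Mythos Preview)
Your argument is correct and follows essentially the same approach as the paper's proof: reduce to a smooth exceptional component via the projection formula, dispose of the cases $r+s-d\neq 0$ by support/dimension counts, and in the critical case $r+s=d$ use that the incidence must occur over $X^{d-1}\setminus X^d$, where the fiber $E_p$ is smooth with trivial normal bundle, so the self-intersection formula $j_*(\xi)\cdot_E j_*(\eta)=j_*(c_1(N)\cap \xi\cdot_{E_p}\eta)=0$ kills the product. The paper organizes the non-critical cases as $r+s-d\ge 2$ and $r+s-d=1$ separately rather than via your unified bound $\dim(z_\alpha\cap\beta\cap X^{d-1})\le \min$ of the two perversity bounds, and it phrases the triviality of $N_{E_p/E}$ as ``$E_x$ is principal'' rather than via smoothness of $E\to C$ at $p$, but these are cosmetic differences.
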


As a matter of notation, we mostly let $\alpha$ denote the factor in which error terms are considered, so that $e_\alpha \in Z_*(\wt X)$ is a cycle which arises by taking the proper transform of an equivalence (respecting some perversity condition) relating (say) $\alpha$ to $\alpha'$ on $X$.

\begin{proof} The claim is obvious if $r+s-d \geq 2$, for then $e_\alpha \cdot \wt \beta, \wt \alpha \cdot e_\beta,$ and $e_\alpha \cdot e_\beta$ are all represented by cycles of dimension $\geq 2$ supported over $X^{d-1}$, hence have generic dimension $\geq 1$ over their images, so that all vanish after $\pi_*$ is applied.  This case requires no perversity condition at all.

Consider the case $r+s-d=1$.  If the error term $e_\alpha$ is supported over a finite set, then $e_\alpha \cdot \wt \beta$ is represented by a 1-cycle which is contracted to a finite set by $\pi$ (since $e_\alpha$ is so contracted), hence $\pi_* (e_\alpha \cdot \wt \beta) = 0$.  The same argument works with $e_\beta$ in the place of $\wt \beta$.  Therefore we may assume $\dim (\alpha^{d-1}) =1$, so that $p_{d-1} = d - r$ and $q_{d-1} \leq r-2$, and hence $\dim (\beta^{d-1})  \leq 0$.  Working one component at a time, we may assume $e_\alpha$ is contained in a single (smooth) exceptional component $i :E \into \wt X$.  Now $e_\alpha \cdot \wt \beta = i_* (e_\alpha \cdot i^* (\wt \beta))$, and $i^* (\wt \beta)$ is represented by an $(s-1)$-dimensional cycle supported over the finite set $\beta^{d-1}$.  Therefore $e_\alpha \cdot i^* (\wt \beta)$ is represented by a 1-cycle which is contracted to a finite set, so $ ( \pi \circ i)_* ( e_\alpha \cdot i^* (\wt \beta) ) = 0$.  Therefore $\pi_* (e_\alpha \cdot \wt \beta) = 0$, as desired.  Since one of $e_\alpha, e_\beta$ must be supported over a finite set in $X^{d-1}$, the term $\pi_* (e_\alpha \cdot e_\beta)$ vanishes for the same reason.

Now we consider the case $r+s-d=0$.  Now $\dim (\alpha^{d-1}) =1$ implies $\dim (\beta^{d-1}) < 0$, so the interesting situation is when both $\alpha^{d-1}$ and $\beta^{d-1}$ are finite; only one of $\alpha, \beta$ is allowed to meet $X^d$.  As in the previous paragraph, we choose a (smooth) exceptional component $i : E \into \wt X$ containing $e_\alpha$.  Both $e_\alpha$ and $i^* (\wt \beta)$ are supported over finite sets in $X^{d-1}$, and their incidence must occur over points in $X^{d-1} \setminus X^d$.  
Again working one component at a time, the cycles $e_\alpha$ and $i^* (\wt \beta)$ are disjoint unless they are supported over the same point $x \in X^{d-1} \setminus X^d$.  Let $j : E_x \into E$ denote the inclusion of the exceptional fiber over $x$.  By construction of the stratification, $E_x$ is smooth.  

Set $\beta' := i^* (\wt \beta)$, and let $N$ denote the normal bundle of the embedding $j$.  Using the projection formula and the self-intersection formula (\cite[Cor.~6.3]{Ful}), we find:
$${j}_* (e_\alpha) \cdot_E  {j}_* ( \beta' ) = j_* (e _\alpha \cdot_{E_x} j^* j_* \beta' ) = j_* (e_\alpha \cdot_{E_x} ( c_1 (N) \cap \beta')).$$
But $E_x$ is principal, hence $c_1 (N) = 0$, and therefore
$${j}_* (e_\alpha) \cdot_E  {j}_* (\beta') = e_\alpha \cdot_ E i^* (\wt \beta) =0.$$
The projection formula for $i$ implies $e_\alpha \cdot \wt \beta = 0 \in A_0( \wt X)$.

The vanishing of $e_\alpha \cdot e_\beta$ holds for a similar reason: the nontrivial case is when both $e_\alpha$ and $e_\beta$ are supported over the same point $x \in X^{d-1} \setminus X^d$.  But then
$j_* (e _\alpha) \cdot_E j_* (e_\beta) = j_* ( e_\alpha \cdot_{E_x}  ( c_1(N) \cap e_\beta ) )$, and the vanishing of $c_1(N)$ allows us to conclude.   \end{proof}

\begin{remark}
The basic obstacle to extending the above analysis to the case $\dim(X_{sing})=2$ is that the singular fibers of $E \to E_2$ (here $E$ is a component of the singular locus with 2-dimensional image $E_2 \into X$) may be supported over a divisor in $E_2$, and, in the case $\alpha^{d-1}$ and $\beta^{d-1}$ are both finite, the perversity conditions do not rule out the incidence being supported in a singular fiber.
\end{remark}

\section{2-cycles on a fourfold} \label{sec:4fold} In this section we work on a normal quasi-projective $4$-dimensional variety $X$; more precisely we assume the singularities of $X$ occur in codimension at least $2$.  As in the previous cases, the resolutions we require exist in characteristic zero.
We describe more explicitly the construction of the stratification satisfying (BC-1), (BC-2), (BC-3), and (AC) with respect to a strong resolution.  We start by declaring $X^2$ is the singular locus of $X$.

\begin{enumerate}

\item (BC-1) For every (smooth, integral) two-dimensional center $S$ such that the composition $S \to X^2$ is dominant over a component of $X^2$, the image of the positive-dimensional (i.e., one-dimensional) fibers of $S \to X^2$ is a finite set in $X^2$.  Place this set in $X^4$.

\item For every two-dimensional center $S$ such that the composition $S \to X^2$ has one-dimensional image $W \into X^2$,

\subitem (BC-1) place $W$ in $X^3$, and

\subitem (BC-3) place the singularities of $W$ in $X^4$.

\subitem (BC-2)  The morphism $p : S \to W$ has integral generic fiber.  Place the zero-dimensional set $ \{ w \in W | p^{-1}(w) \text{ is reducible} \}$ in $X^4$.
 
\item (BC-1) If the two-dimensional center $S$ of a blowup has zero-dimensional image in $X^2$ (so it lies over a single point $x \in X^2$), then $x$ is placed in $X^4$.

\item (AC) Let $p : \widetilde{E}_{\text{dom }S} \to S$ denote the canonical morphism.  Suppose $\widetilde{E}_{\text{dom }S}$ has $t$ components.  Then the image in $X^2$ of the closed set $\{ s \in S | p^{-1}(s) \text{ has more than $t$ components} \}$
is placed in $X^3$.
\item For every one-dimensional center $C$ which is generically finite onto its image in $X^2$,

\subitem (BC-1) place the image curve in $X^3$, and

\subitem (BC-3) place its singularities in $X^4$.

\subitem (BC-1) If a one-dimensional center $C$ has zero-dimensional image in $X^2$ (so it lies over a single point $x \in X^2$), then $x$ is placed in $X^4$.

\subitem (AC) Place in $X^4$ the closed set in $C$ over which $\widetilde{E}_{\text{dom }C} \to C$ has more components than does $\widetilde{E}_{\text{dom }C}$ itself.

\item (BC-1) The image in $X^2$ of every zero-dimensional center $Z$ is placed in $X^4$.

\end{enumerate}

The instances of condition (BC-1) are necessary to guarantee the stratification refines the stratification by fiber dimension.  In the course of the proof we point out where the other conditions are used; the condition (BC-3) in (5) does not seem to be necessary, but (BC-3) in (2) is used.

We will use notation from the following diagram.

$$\xymatrix{ \widetilde{X} \ar[r] & \Bl_{Z_n}X_n \ar[r]  & X_n \ar[r] & \ldots \ar[r] & X \\
\widetilde{E} \ar[u] & E_n \ar[u] \ar[r]^-{p_n} & Z_n \ar[u] & \ldots & X^2 \ar[u] \\ }$$
The center $Z_n$ will be written as $S_n$ when it is two-dimensional and as $C_n$ when it is one-dimensional.  Note that each $Z_n$ is smooth, and each $p_n : E_n \to Z_n$ is a flat morphism.  Zero-dimensional centers play no essential role since incidences in $X^4$ are forbidden by the perversity condition.
Blowups along three-dimensional centers are finite morphisms (by normal flatness) and do not influence the stratification.

\begin{theorem} \label{4fold} Let $X$ be a quasi-projective fourfold over $k$, and let $\pi : \wt X \to X$ be a strong resolution of singularities.
Suppose a stratification of $X$ satisfies (BC-1), (BC-2), (BC-3), and (AC) with respect to $\pi$.
Let $\ol{p}, \ol{q}$ be perversities such that $\ol{p} + \ol{q} = \ol{t}$.  The assignment $(\alpha, \beta) \mapsto \pi_* (\widetilde{\alpha} \cdot \widetilde{\beta})$ determines a well-defined pairing $A^w_{2, \ol{p}} (X)_\bb{Q} \otimes A^w_{2, \ol{q}} (X)_\bb{Q} \to A_0(X)_\bb{Q}.$
\end{theorem}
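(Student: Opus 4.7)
Given weak rational equivalences $\alpha \sim_{w,\ol p} \alpha'$ and $\beta \sim_{w,\ol q} \beta'$, the error-term formalism of Section \ref{sec:cycles} yields $\wt\alpha + e_\alpha \sim \wt{\alpha'} + e_{\alpha'}$ and $\wt\beta + e_\beta \sim \wt{\beta'} + e_{\beta'}$ in $A_2(\wt X)$, with the error terms supported over $X^2$. Since $\wt X$ is smooth, intersection preserves rational equivalence, so expanding $\wt\alpha \cdot \wt\beta - \wt{\alpha'} \cdot \wt{\beta'}$ reduces the assertion to the three vanishings
\[ \pi_*(e_\alpha \cdot \wt\beta) = \pi_*(\wt\alpha \cdot e_\beta) = \pi_*(e_\alpha \cdot e_\beta) = 0 \in A_0(X)_\bb{Q} \]
for arbitrary components of each error term; the $\alpha \leftrightarrow \beta$ symmetry lets me focus on the first and the third.

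\textbf{Component analysis and easy subcases.} Working one integral component $e \subseteq |e_\alpha|$ at a time, $e$ lies in some irreducible exceptional divisor $E \subseteq \wt E$ and first appears over a smooth center $Z_n \subseteq X_n$. The refinement condition (BC-1) together with the placement of 0-dimensional center images in $X^4$ reduces to the cases $\dim Z_n \in \{1,2\}$. If $\dim \pi(e) \leq 1$, or if $e \to \pi(e)$ has positive-dimensional generic fiber, then a direct dimension count using the $\ol q$-perversity constraint on $\wt\beta$ across the stratum containing $\pi(e)$, combined with complementarity $\ol p + \ol q = \ol t$, shows $e \cdot \wt\beta$ (resp. $e \cdot e_\beta$) is represented by a cycle of positive dimension over its $\pi$-image, forcing the vanishing after $\pi_*$. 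The constraint $p_2 + q_2 = 1$ further ensures that at most one of $e_\alpha, e_\beta$ can be generically finite onto a 2-dimensional image in $X^2$, so the residual case for $\pi_*(e_\alpha \cdot e_\beta)$ reduces to the main case treated next.

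\textbf{Main case.} The essential situation is when $e$ is generically finite onto a 2-dimensional image $W \subseteq X^2$; then $E$ is the 3-dimensional exceptional divisor of a 2-dimensional center $S_n$ dominating $W$, and complementarity gives $q_2 = 1$, so $\wt\beta$ may meet $E$ in a 1-cycle $i^*\wt\beta$. Following the template of Theorem \ref{divisor pairing}, (AC) excises the reducible-component locus $R_n \subseteq S_n$, (BC-2) excises the reducible-fiber locus of $S_n \to W$, and (BC-3) excises the singular locus of $W$ (their images lying in $X^3 \cup X^4$); on the resulting open $E^\circ \subseteq E$, the map $\pi_E$ is a genuine $\PP^1$-bundle, so $\pic(E^\circ)_\bb{Q} \cong \pi_E^* \pic(S_n)_\bb{Q} \oplus \bb{Q} \cdot z$ with $z = c_1(\cal{O}_E(1))$, and an analogous decomposition is available for 1-cycles via the projective bundle formula. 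Decomposing $[e]$ and $[i^*\wt\beta]$ in these bases and invoking rational equivalence on the smooth surface $S_n$, I would move a rational multiple of $e$ inside $E^\circ$ to a 2-cycle disjoint from $\wt\beta$, yielding the vanishing of $e \cdot \wt\beta$ in $A_0(\wt X)_\bb{Q}$. The residual contributions, supported over $E \setminus E^\circ$ and hence over $\pi^{-1}(X^3 \cup X^4)$, are killed by secondary dimension counts using $p_3 + q_3 = 2$ and $p_4 + q_4 = 3$.

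\textbf{Main obstacle.} The hardest step is the moving argument on $E^\circ$: translating the $\pic$-level decomposition of $e$ into an explicit rational equivalence on $\wt X$ that moves $e$ off $\wt\beta$, rather than merely pushing the intersection to a 0-cycle on $S_n$ that might survive further pushforward to $X$. This parallels the Picard-group argument of Theorem \ref{divisor pairing} in a more delicate codimension-2 setting, relies on all four stratification conditions working in concert, and requires the $\bb{Q}$-coefficients to absorb the multiplicities introduced by the decomposition and by intermediate blowups in the strong resolution tower.
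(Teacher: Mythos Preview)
Your case analysis is miscalibrated in two ways, and together they leave the heart of the argument untouched.

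\textbf{The ``easy'' subcases are not easy.} You claim that when $\dim \pi(e) \le 1$ (or $e \to \pi(e)$ has positive-dimensional generic fiber) a dimension count shows $e \cdot \wt\beta$ is ``represented by a cycle of positive dimension over its $\pi$-image, forcing the vanishing after $\pi_*$.'' But $e$ and $\wt\beta$ are both $2$-cycles on a fourfold, so $e \cdot \wt\beta$ is a \emph{$0$-cycle}. A $0$-cycle cannot have positive fiber dimension over anything; its pushforward is again a $0$-cycle on $X$, and nothing about the size of its support forces it to vanish in $A_0(X)$. The dimension trick you have in mind is the one from the divisor pairing, where $e_D \cdot \wt\alpha$ is an $(r-1)$-cycle that can land in a set of dimension $\le r-2$; that mechanism simply does not exist when $r+s-d = 0$.

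\textbf{Your ``main case'' never occurs.} An error term $e_\alpha$ is supported over $A \cap X^2$ for $A$ a $2$-cycle of perversity $\ol p$; since $p_2 \le 1$ we have $\dim(A \cap X^2) \le 1$. Hence $\dim \pi(e) \le 1$ \emph{always}, and the situation ``$e$ generically finite onto a $2$-dimensional $W \subseteq X^2$'' is empty. The projective-bundle decomposition you sketch on $E^\circ$ is aimed at a case that does not arise.

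What actually has to happen is this: one separates into the complementary perversity pairs and, within each, classifies error components $e$ by the stratum ($X^2 \setminus X^3$, $X^3 \setminus X^4$, $X^4$) carrying their generic point and by the center $Z_n$ on which they first appear. The image $e_n$ of $e$ in $Z_n$ is a curve (if $\dim Z_n = 2$) or a point (if $\dim Z_n = 1$). One then \emph{moves on the center}: choose a rational function $g_n$ on $Z_n$ (or on the image curve $W$) carrying $e_n$ away from the finite set coming from the other cycle's incidence with that stratum, and pull $g_n$ back along $E \to Z_n$. Condition (AC) (and (BC-2), (BC-3) in the $X^3 \setminus X^4$ cases) guarantees that the fiber of $E \to Z_n$ over $e_n$ is irreducible, so the pulled-back divisor is a rational multiple of $e$ plus something already disjoint from $\wt\beta$; this is where the $\bb{Q}$-coefficients enter. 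The Picard-group argument of Theorem~\ref{divisor pairing} is not the right template here; the correct template is a moving lemma on the smooth center, applied case by case.
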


There are essentially three complementary pairs of perversities to analyze, and these are handled in the next three propositions.  For each pair the strategy is to consider possible locations of the generic points of error terms, and for each location we move the error term away from the other 2-cycle.  If the error term first appears on the blowup of $X_n$ along $Z_n$, then the move is achieved by finding a suitable rational function on $Z_n$.  

\begin{proposition} Theorem \ref{4fold} is true for $\ol{p}=\ol{0}, \ol{q} = \ol{t}$; and for $\ol{p}=(0,0,1), \ol{q}=(1,2,2)$. \end{proposition}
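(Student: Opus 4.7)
The strategy is to verify that the assignment $(\alpha, \beta) \mapsto \pi_*(\wt\alpha \cdot \wt\beta)$ is invariant under weak rational equivalence in each variable. Given $\alpha \sim_{w, \ol p} \alpha'$ and $\beta \sim_{w, \ol q} \beta'$, lifting each equivalence to $\wt X$ produces error terms $e_\alpha, e_\beta$, and the required invariance reduces to the three vanishings
\[
\pi_*(e_\alpha \cdot \wt\beta) = \pi_*(\wt\alpha \cdot e_\beta) = \pi_*(e_\alpha \cdot e_\beta) = 0 \in A_0(X)_\QQ.
\]
Following the plan outlined in the introduction and implemented for divisors in Theorem~\ref{divisor pairing}, I would prove each vanishing by producing, for some positive integer $N$, a cycle on $\wt X$ rationally equivalent to $N$ times the first factor and disjoint from the second; the intersection then vanishes in $A_0(\wt X)$ and, after pushforward, in $A_0(X)_\QQ$.

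For both perversity pairs, $\ol p$ forces $\alpha$ and every term $[g_i(t)]$ of any $\ol p$-equivalence to avoid $X^3 \cup X^4$ and to meet $X^2$ in a finite set, so $e_\alpha$ is supported in the fibers $\pi^{-1}(x)$ for $x$ in a finite subset of $X^2 \setminus X^3$; a symmetric statement applies to $e_\beta$. I would decompose $e_\alpha$ into irreducible components and classify each by the blowup $X_{n+1} = \Bl_{Z_n} X_n$ on which it first appears. The strong-resolution hypothesis endows the exceptional divisor $E_n$ with a bundle structure $E_n \to Z_n$, which is the lever for the move: pull back a suitable class from $Z_n$, apply the projection formula for $E_n \to Z_n$, and argue as in the $\pic$ computation $m \cdot \pi_E^* M = i^* L$ of Theorem~\ref{divisor pairing}. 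The complementarity $\ol p + \ol q = \ol t$ is what matches $\ol p$-elevated supports to small $\ol q$-incidences: whenever $p_i \geq 1$ holds on the support of a component of $e_\alpha$, the dual inequality $q_i \leq i-2$ confines $\wt\beta$ to low-dimensional incidence with the relevant fiber, supplying either an immediate dimension cancellation or the input needed for the projection-formula move.

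The two perversity pairs have essentially the same cycle content for $r = s = 2$, $d = 4$, differing only in their labels at the point stratum $X^4$; since $\alpha$ and $\beta$ avoid $X^4$ in either case, the stratum-by-stratum argument for $(\ol 0, \ol t)$ adapts without change to $((0,0,1),(1,2,2))$, with the accounting at $p_4, q_4$ only shifting a single unit between the two factors. I would therefore present the argument once, uniformly in the pair.

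The main obstacle I anticipate is the case in which an irreducible component $e$ of $e_\alpha$ is a 2-cycle inside a 3-dimensional fiber $\pi^{-1}(x)$ over a deep point $x$, arising when several blowups accumulate over $x$. In contrast to the divisor case, whose key move lives in $\pic(E_n)_\QQ$, here the move must be realized in $A_2(E_n)_\QQ$, whose generators are less uniform and whose classes are harder to track through the tower of blowups. I expect the stratification conditions (BC-1), (BC-2), (BC-3), and (AC) to rule out the pathological first-appearance configurations that would obstruct the move, with one condition matched to each center type (surface with two-dimensional dominant image, surface contracted to a curve or point, curve, or point), yielding an explicit rational equivalence in each case. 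Combined with the complementarity of perversities, this should drive all three vanishings.
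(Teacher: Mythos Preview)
Your proposal misses the central simplification that makes this particular perversity pair the easiest case, and in doing so mislocates where the real work lies.

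For both pairs, $\ol p$ forces $\alpha^3 = \emptyset$ and $\dim(\alpha^2) \leq 0$, and the same holds for every term in any $\ol p$-equivalence. You correctly observe this, but then fail to draw the immediate consequence: by (BC-1) the fibers of $\pi$ over $X^2 \setminus X^3$ have dimension at most $1$, so $\pi^{-1}(z^2)$ is at most $1$-dimensional for any such term $z$. A $2$-cycle cannot be supported there, hence $e_\alpha = 0$ outright and $\wt\alpha \sim \wt{\alpha'}$ on $\wt X$. There is no decomposition of $e_\alpha$ to perform, no first-appearance analysis, and certainly no ``2-cycle inside a 3-dimensional fiber over a deep point'': such fibers live only over $X^4$, which $\alpha$ avoids. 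Your anticipated main obstacle is a phantom.

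The claim that ``a symmetric statement applies to $e_\beta$'' is false and obscures where the argument actually happens. With $\ol q = \ol t$ the perversity condition on $\beta$ is essentially vacuous, so $e_\beta$ may have its generic point over $X^3$ (in which case it is disjoint from $\wt\alpha$ since $\alpha^3 = \emptyset$) or over $X^2 \setminus X^3$. In the latter case the paper does not use a $\pic$-style projection-formula computation as in Theorem~\ref{divisor pairing}; instead it exploits that $e_\beta$ first appears over a smooth \emph{surface} center $S_n$, so its image $e_n \subset S_n$ is a curve, and one can move $e_n$ off the finite set $a^{-1}(\alpha^2)$ by an honest rational function on $S_n$. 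Condition (AC) then guarantees that lifting this function to the exceptional divisor $E$ moves a rational multiple of $e_\beta$ itself, yielding $\wt\alpha \cdot e_\beta = 0$ in $A_0(\wt X)_\QQ$. The argument is a concrete moving lemma on the center, not an abstract class computation in $A_2(E_n)_\QQ$.
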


\begin{proof} If $\alpha \sim_{w, \ol{p}} \alpha'$ and $z$ is a cycle appearing in the equation relating $\alpha$ to $\alpha'$, the dimension of $\pi^{-1}(z^i)$ is at most $2-i + (i-1) =1$.  Since the preimage of the exceptional part is $1$-dimensional, it cannot support a $2$-cycle and the error terms vanish.  Therefore $\widetilde{\alpha} \sim \widetilde{\alpha'}$ and clearly then $\pi_* (\widetilde{\alpha} \cdot \widetilde{\beta}) \sim \pi_* (\widetilde{\alpha'} \cdot \widetilde{\beta})$.

It remains to check the compatibility with $\ol{q}$-rational equivalence in $\beta$, i.e., the vanishing $\pi_* (\widetilde{\alpha} \cdot e_\beta) =0$, where $e_\beta$ is a component of the error term $e(\beta, \beta')$.  Now $\alpha^2$ is a finite set contained in the smooth part $X^2 \setminus X^3$ of $X^2$, and $\alpha^3$ is empty.  If the generic point of $e_\beta$ lies over $X^3$, then $\widetilde{\alpha} \cap e_\beta = \emptyset$ and we are done.

Therefore we suppose the generic point of $e_\beta$ lies over $X^2 \setminus X^3$, and let $E \subseteq \widetilde{E} \into \widetilde{X}$ be a component of the exceptional locus which contains $e_\beta$.  If the image of $E$ has dimension less than or equal to $1$, then $E$ has generic dimension at least $2$ over its image, contradicting the assumption that the generic point of $e_\beta$ lies over $X^2 \setminus X^3$.  Therefore $e_\beta$ first appears on some blowup $\Bl_{S_n}X_n$ where $S_n$ is a smooth surface on some intermediate variety $X_n$.  Now let $E$ denote an exceptional divisor which contains $e_\beta$ (as above), and maps birationally onto a divisor which is exceptional for the morphism $\Bl_{S_n}X_n \to X_n$ (i.e., $E$ also first appears on $\Bl_{S_n}X_n$).

 Let $a : S_n \to X^2$ denote the canonical morphism.  Note that ${a}^{-1}(\alpha^2)$ is finite by our construction of the stratification.
Let $e_n$ denote the (one-dimensional) image of $e_\beta$ in $S_n$.  There exists a $1$-cycle $C$ on $S_n$ which is rationally equivalent to $e_n$, and with the property that $C \cap {a}^{-1}(\alpha^2) = \emptyset$.  In other words, there is a rational function $g_n: S_n \dashrightarrow \bb{P}^1$ such that $[g_n(0)] = e_n + Z$, and such that both $Z$ and $[g_n(\infty)]$ are disjoint from ${a}^{-1}(\alpha^2)$.

Now consider the composition $g: E \to S_n \dashrightarrow \bb{P}^1$.  By the fiber integrality hypothesis (i.e., since (AC) in (4) places the image of the reducible fibers of $E \to S_n$ into $X^3$), the support of $e_\beta$ coincides (set-theoretically) with the preimage of $e_n$ by the morphism $E \to S_n$.  
Therefore $g$ provides a rational equivalence between $m \cdot e_\beta + Z'$ and $Z''$, where $Z'$ is supported over $Z$, and $Z''$ is supported over $[g_n(\infty)]$.  Hence both $Z'$ and $Z''$ are disjoint from $\widetilde{\alpha}$.  Therefore $\widetilde{\alpha} \cdot (m \cdot e_\beta) \sim \widetilde{\alpha} \cdot (Z'' - Z') = 0$ and so $\widetilde{\alpha} \cdot e_\beta$ is zero in $A_0(\widetilde{X})_\bb{Q}$, as desired.

An identical argument handles the pair $\ol{p}=(0,0,1), \ol{q}=(1,2,2)$ since it imposes the same conditions on $\alpha$ and $\beta$. \end{proof}
 
\begin{proposition}  \label{intro smooth ex} Theorem \ref{4fold} is true for $\ol{p}=(0,1,1), \ol{q} = (1,1,2)$. \end{proposition}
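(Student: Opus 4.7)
The proof will follow the strategy of the previous proposition, handling the error terms $e_\alpha, e_\beta$ arising from weak $\ol{p}$- and $\ol{q}$-rational equivalences via a case analysis on the blowup at which each first appears. As before, it suffices to show the three push-forwards $\pi_*(\wt\alpha \cdot e_\beta)$, $\pi_*(e_\alpha \cdot \wt\beta)$, and $\pi_*(e_\alpha \cdot e_\beta)$ vanish in $A_0(X)_\bb{Q}$. The perversity bounds give $\dim\alpha^2,\dim\beta^2 \leq 1$; $\dim\alpha^3,\dim\beta^3,\dim\beta^4 \leq 0$; and $\alpha^4 = \emptyset$. The essential asymmetry to exploit is the emptiness of $\alpha^4$: by (BC-1) the positive-dimensional fibers of any dominant morphism $a:S_n \to X^2$ lie over $X^4$, so the preimage $a^{-1}(\alpha^2) \subset S_n$ is a $1$-dimensional subset of the smooth surface $S_n$, not a $2$-dimensional one.

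I would first dispose of the dimension-counting cases, namely those where the relevant error term first appears on a blowup along a $1$-dimensional center, or along a $2$-dimensional center whose image in $X^2$ has dimension at most $1$. In all such cases the $0$-cycle representing the relevant product has positive generic fiber over its image in $X$ and so vanishes after $\pi_*$, exactly as in the dimension-count paragraphs of Theorem~\ref{divisor pairing} and the previous proposition. The interesting remaining case is when $e_\beta$ first appears on $\Bl_{S_n}X_n$ with $a:S_n \to X^2$ dominant onto a component of $X^2$. I fix an exceptional component $i:E \into \wt{X}$ containing $e_\beta$, with the natural (generic) $\bb{P}^1$-bundle structure $\pi_E:E \to S_n$, and write $e_n := \pi_E(e_\beta)$, a curve on $S_n$.

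Unlike in the previous proposition (where $\alpha^2$ was finite, making $a^{-1}(\alpha^2) \subset S_n$ zero-dimensional and allowing a single rational function on $S_n$ to move $e_n$ disjoint from it), here both $e_n$ and $a^{-1}(\alpha^2)$ are $1$-dimensional cycles on the surface $S_n$, so no rational equivalence on $S_n$ can separate them set-theoretically. My proposal is to localize and then linearize. First, by restricting to the open set $X \setminus |\alpha \cap X^3|$ (justified by the same commutative square used in the last paragraph of the proof of Theorem~\ref{divisor pairing}), we may assume the incidence $e_n \cap a^{-1}(\alpha^2)$ is a finite set $F \subset S_n$ lying over $X^2 \setminus X^3$. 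On that open locus the fiber-integrality hypothesis (AC) in (4) ensures that above each point of $F$ the preimage in $E$ is an irreducible $\bb{P}^1$-fiber, so the support of $e_\beta$ coincides set-theoretically with $\pi_E^{-1}(e_n)$ over $F$. A rational function $g_n$ on the smooth quasi-projective surface $S_n$ with $[g_n(0)] = e_n + Z$ and with $Z, [g_n(\infty)]$ both disjoint from the finite set $F$ (existence via a sufficiently positive linear system, after clearing denominators with $\bb{Q}$-coefficients) pulls back via $\pi_E$ to an equation of rational equivalence on $E$ between $m\cdot e_\beta + Z'$ and $Z''$, with $Z', Z''$ supported away from $i^*\wt\alpha$. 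The main obstacle I foresee is ensuring that the correction divisor $Z$ does not spuriously meet $a^{-1}(\alpha^2)$ away from $F$; this will require choosing the pencil via a Bertini-type argument, using the fact that $a^{-1}(\alpha^2)$ is a proper closed subset of the smooth surface $S_n$. The push-forward $\pi_*(e_\alpha \cdot \wt\beta)$ is handled by the symmetric argument with roles of $\alpha, \beta$ interchanged, and $\pi_*(e_\alpha \cdot e_\beta)$ follows from combining both moves exactly as in the previous proposition.
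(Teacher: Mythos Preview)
Your proposal contains two errors, one of which is a genuine gap.

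First, you miscompute the perversity bounds: since $p_2 = 0$, a $2$-cycle $\alpha$ of perversity $\ol{p} = (0,1,1)$ satisfies $\dim\alpha^2 \leq 2 - 2 + 0 = 0$, so $\alpha^2$ is \emph{finite}, not one-dimensional. Consequently, in your main case (where $e_\beta$ first appears on $\Bl_{S_n}X_n$ with $a:S_n \to X^2$ dominant), the set $a^{-1}(\alpha^2)$ is already finite (by (BC-1), since $\alpha^4 = \emptyset$). No localization to $X \setminus |\alpha \cap X^3|$ and no Bertini-type argument is needed: a single rational function on $S_n$ moving the curve $e_n$ off the finite set $a^{-1}(\alpha^2)$ suffices, exactly as in the preceding proposition. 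This error is not fatal on its own, but it led you to overcomplicate the part of the argument that is actually easiest.

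Second, and more seriously, your ``dimension-counting'' paragraph is incorrect. You claim that when the error term first appears on a blowup along a one-dimensional center, or along a two-dimensional center whose image in $X^2$ has dimension $\leq 1$, the resulting $0$-cycle ``has positive generic fiber over its image in $X$ and so vanishes after $\pi_*$.'' But the product here is a $0$-cycle, and $\pi_*$ of a $0$-cycle just records images with multiplicities; fiber dimension gives no vanishing. Concretely: any component $e_\alpha$ is a surface lying over a single point $x \in X^3 \setminus X^4$ (since $\alpha^2$ is finite, $\alpha^4=\emptyset$, and fibers over $X^2 \setminus X^3$ are one-dimensional), and if $x \in \beta^3$ then $e_\alpha \cdot \wt\beta$ may well be a nonzero $0$-cycle supported over $x$. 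The paper handles this by observing that $e_\alpha$ first appears on the blowup along a one-dimensional center $C_n$, and then uses a rational function on $C_n$ together with (AC) in (5) to move a multiple of $e_\alpha$ off the finite set where the proper transform of $\beta$ meets $C_n$. Likewise, when $e_\beta$ has generic point over $X^3 \setminus X^4$ and first appears on a two-dimensional center $S_n$ with one-dimensional image $W$, the paper moves via a rational function on $W$, and this is precisely where (BC-2) and (BC-3) in item (2) of the stratification are used. Your proposal omits all of these cases, and the ``symmetric argument'' you invoke for $\pi_*(e_\alpha \cdot \wt\beta)$ does not apply since $\ol{p} \neq \ol{q}$: interchanging roles would require $\beta^4 = \emptyset$, which is false for $\ol{q} = (1,1,2)$.
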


\begin{proof}  In this case $\alpha^2$ is finite and $\alpha^4$ is empty, hence if $e(\alpha, \alpha')$ is nonempty it consists of several surfaces lying over some zero-cycle $Z \into X^3 \setminus X^4$.  Thus a component $e_\alpha$ of $e(\alpha, \alpha')$ is an irreducible surface which first appears on $\Bl_{C_n}X_n$, the blowup of some intermediate variety $X_n$ along a smooth one-dimensional center $C_n$.  Let $E$ denote an exceptional divisor that contains $e_\alpha$ and first appears when $e_\alpha$ does (so that $E \into \wt{E}_{C_n}$).
The image of $e_\alpha$ is a point $c \in C_n$; since $C_n$ is finite over its image in $X^2$, the proper transform of $\beta$ via $X_n \to X$ meets $C_n$ in a finite set.

There exists a rational function $g_n: C_n \to \bb{P}^1$ such that $[g_n(0)] = c + Z$, and such that both $Z$ and $[g_n(\infty)]$ are disjoint from the proper transform of $\beta$.  Consider the rational function $g: E \to C_n \to \bb{P}^1$.  Since the image of $c \in C_n$ lies in $X^3 \setminus X^4$, the fiber of $E \to C_n$ over $c$ is irreducible (by (AC) in (5)), and therefore $e_\alpha$ coincides set-theoretically with $E \cap \pi^{-1}(c)$.
Therefore $g$ provides a rational equivalence between $m \cdot e_\alpha + Z'$ and $Z''$, where $Z'$ is supported over $Z \into X^3 \setminus X^4$, and $Z''$ is supported over $[g_n(\infty)]$.  Consequently $e_\alpha \cdot \widetilde{\beta} = 0$ in $A_0(\widetilde{X})_\bb{Q}$.   Note $e_\beta$ must be supported over a finite set in $C_n$, so the same construction moves $e_\alpha$ away from $e_\beta$.

Now we show the vanishing of $\widetilde{\alpha} \cdot e(\beta, \beta')$.  First we consider components $e_\beta$ of $e(\beta, \beta')$ whose generic points lie over $X^2 \setminus X^3$.  Then $e_\beta$ first appears on the blowup of a two-dimensional center $S_n$ which is dominant over a component of $X^2$.  The image of $e_\beta$ in $S_n$ is a subvariety of dimension $1$, and $e_n := p_n (e_\beta)$ is not contracted by $a: S_n \to X^2$.  (If $e_\beta$ were supported over a subvariety contracted by $a$, then it would have generic dimension $2$ over its image.)  We choose an exceptional divisor $E \supset e_\beta$ as usual.

Since $\alpha^4$ is empty, ${a}^{-1}(\alpha^2)$ is finite.  Therefore $e_n$ is rationally equivalent to a $1$-cycle on $S_n$ which is disjoint from ${a}^{-1}(\alpha^2)$, i.e., there exists a rational function $g_n: S_n \dashrightarrow \bb{P}^1$ such that $[g_n(0)] = e_n + Z'$, and such that both $Z'$ and $[g_n(\infty)]$ are disjoint from ${a}^{-1}(\alpha^2)$.  Now we consider the composition $g := g_n \circ p_n : E \to S_n \dashrightarrow \bb{P}^1$.  By the integrality condition on the fibers (condition (AC) in (4)), $[g(0)]$ is a multiple of $e_\beta$ plus a 2-cycle supported over $Z'$, and $[g(\infty)]$ is a 2-cycle supported over $[g_n(\infty)]$.  In particular, some multiple of $e_\beta$ is rationally equivalent to a cycle disjoint from $\widetilde{\alpha}$.

Next we consider error terms $e_\beta$ whose generic points are supported over $X^3 \setminus X^4$.  There are two possibilities for first appearance.
Such a term may first appear on the blowup along a (smooth) one-dimensional center $C_n$ (with one-dimensional image in $X^2$); 
this is handled by finding a rational function on $C_n$ which, upon precomposing with a canonical morphism from an exceptional divisor to $C_n$, gives a rational equivalence between some multiple of $e_\beta$ and a $2$-cycle which is disjoint from the preimage of $\alpha^3$ (as $\alpha^3$ necessarily meets $C_n$ in a finite set) and hence from $\widetilde{\alpha}$.

The other possibility is that the error term $e_\beta$ (with generic points supported over $X^3 \setminus X^4$) first appears on the blowup along a (smooth) two-dimensional center $S_n$, in which case the one-dimensional image of $e_\beta$ inside $S_n$ is contracted to a point by the morphism $S_n \to X^2$: either the center $S_n$ has one-dimensional image in $X^2$, or $e_\beta$ is supported over some exceptional part of the generically finite morphism $S_n \to X^2$.  But $e_\beta$ is exceptional for $S_n \to X^2$ implies the generic point of $e_\beta$ is supported over $X^4$, so we may assume the two-dimensional center $S_n$ has one-dimensional image $W \into X^2$.  By our definition of $X^4$, (the finite sets) $\alpha^3 \cap W$ and $\beta^3 \cap W$ are supported in the smooth locus of $W$, and in the locus over which $S_n \to W$ has integral fibers (by (BC-2) and (BC-3) as in (2)).  Let $q$ denote the canonical morphism $E \to S_n \to W$, and let $w \in W$ be the image of $e_\beta$ via $q$.  Note that $e_\beta$ coincides set-theoretically with $q^{-1}(w)$.  There exists a rational function $g: W \dashrightarrow \bb{P}^1$ such that $[g(0)] = w + Z$, and such that both $Z$ and $[g(\infty)]$ are disjoint from $\alpha^3$.  Then the rational function $E \to S_n \to W \xrightarrow{g} \bb{P}^1$ provides a rational equivalence between some multiple of $e_\beta$ and a cycle which is disjoint from the preimage of $\alpha^3$, so that $\widetilde{\alpha} \cdot e_\beta \sim 0$ as desired.

Finally, components $e_\beta$ of $e(\beta, \beta')$ supported over $X^4$ are automatically disjoint from $\widetilde{\alpha}$.  In particular this applies if $e_\beta$ is supported over some exceptional part of the generically finite morphism $S_n \to X^2$. \end{proof}

\begin{proposition} Theorem \ref{4fold} is true for $\ol{p}=(0,1,2), \ol{q} = (1,1,1)$. \end{proposition}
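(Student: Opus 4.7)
The plan mirrors the preceding two propositions: classify each error term $e_\alpha, e_\beta$ by the stratum over which its generic point lies, and in each case either observe automatic disjointness from a perversity constraint or construct a rational equivalence on $\widetilde X$ pulled back from a rational function on the center $Z_n$ where the error term first appears. Relative to Proposition \ref{intro smooth ex}, the roles of $\alpha$ and $\beta$ are swapped in the deepest stratum: $\alpha$ is now permitted to meet $X^4$ in a finite set, while $\beta$ must be disjoint from $X^4$; both cycles have finite intersection with $X^3$, and $\beta$ may still have a curve of incidence with $X^2 \setminus X^3$. Applying the dimension bound $\dim(\pi^{-1}(z^i)) \leq (2-i+p_i) + (i-1)$ used at the start of each preceding proof, the error term $e_\alpha$ has generic point over $X^3 \setminus X^4$ or $X^4$, while $e_\beta$ has generic point over $X^2 \setminus X^3$ or $X^3 \setminus X^4$ and never meets $\pi^{-1}(X^4)$.

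For $\pi_*(e_\alpha \cdot \widetilde\beta) = 0$, the case $e_\alpha$ over $X^4$ is automatic since $\beta^4 = \emptyset$ forces $\widetilde\beta \cap \pi^{-1}(X^4) = \emptyset$. When $e_\alpha$ lies over $X^3 \setminus X^4$, the argument of Proposition \ref{intro smooth ex} adapts nearly verbatim: $e_\alpha$ first appears on the blowup of a one-dimensional center $C_n$ or of a two-dimensional center $S_n$ with one-dimensional image $W \subseteq X^3$, and a rational function on $C_n$ (respectively $W$) moves the image of $e_\alpha$ away from the finite set $\beta^3$, with (AC) ensuring $e_\alpha$ coincides set-theoretically with the fiber over the relevant point. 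For $\pi_*(\widetilde\alpha \cdot e_\beta) = 0$, the subcase $e_\beta$ over $X^3 \setminus X^4$ is symmetric, using $\alpha^3$ finite.

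The main new subcase is $e_\beta$ over $X^2 \setminus X^3$: then $e_\beta$ first appears on the blowup of a two-dimensional center $S_n$ dominant over a component of $X^2$, and I would construct a rational function $g_n : S_n \dashrightarrow \PP^1$ with $[g_n(0)] = m \cdot e_n + Z$ (where $e_n := p_n(e_\beta)$), then lift via $p_n : E \to S_n$ and (BC-2) fiber integrality to a rational equivalence on $E$ showing a multiple of $e_\beta$ is equivalent to a cycle disjoint from $\widetilde\alpha$. The key verification, as in Proposition \ref{intro smooth ex}, is $e_n \not\subseteq a^{-1}(\alpha^2)$ (where $a : S_n \to X^2$): if the containment held, $a$ would contract $e_n$ to a point of $\alpha^2$, which by (BC-1) must lie in $X^4$, placing the generic point of $e_\beta$ over $X^4$ and contradicting the subcase. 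The vanishing $\pi_*(e_\alpha \cdot e_\beta) = 0$ then follows from the same constructions, since both error terms project into subsets of the relevant center of strictly lower dimension, so the rational functions above can be chosen to simultaneously avoid the opposing error term's image.

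The main obstacle I anticipate is executing this surface-level move when $\alpha^4$ is non-empty. In Proposition \ref{intro smooth ex} the set $a^{-1}(\alpha^2)$ was automatically finite (forced by $\alpha^4 = \emptyset$), making it trivial to find $g_n$ with $Z$ and $[g_n(\infty)]$ disjoint from $a^{-1}(\alpha^2)$; here, by contrast, (BC-1) only forces positive-dimensional fibers of $a$ to lie over $X^4$, so $a^{-1}(\alpha^2)$ may contain the entire curve $a^{-1}(p)$ for each $p \in \alpha^4$. One approach is to reduce to avoiding a smaller set, namely the projection of $\widetilde\alpha \cap e_\beta$ to $S_n$: for each $p \in \alpha^4$ the intersection $\widetilde\alpha \cap e_\beta \cap \pi^{-1}(p)$ is zero-dimensional (since $e_\beta \cap \pi^{-1}(X^4)$ is zero-dimensional by the dimension bound), so it should suffice to arrange the divisor of $g_n$ to miss a finite collection of bad points on $a^{-1}(\alpha^4)$ together with the finite set $a^{-1}(\alpha^2 \setminus X^4)$, which is achievable on the smooth quasi-projective surface $S_n$ with $\bb Q$-coefficients.
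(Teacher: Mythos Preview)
Your overall plan is right and matches the paper through the cases $e_\alpha$ over $X^3 \setminus X^4$ or $X^4$, and $e_\beta$ over $X^3 \setminus X^4$. (Minor correction: the fiber integrality of $E \to S_n$ in the case of a center dominant over a component of $X^2$ is condition (AC), not (BC-2).) The genuine difficulty is exactly the one you flag in your last paragraph, and your proposed repair does not work.

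The problem with ``reduce to avoiding the projection of $\widetilde\alpha \cap e_\beta$'' is that once you move $e_\beta$ via a rational function $g_n$ on $S_n$, the replacement cycle is (a multiple of) $p_n^{-1}([g_n(\infty)]) - p_n^{-1}(Z)$, and you need \emph{that} cycle to be disjoint from $\widetilde\alpha$, not from $\widetilde\alpha \cap e_\beta$. The obstruction to disjointness from $\widetilde\alpha$ is the image of $\widetilde\alpha \cap E$ in $S_n$, which is contained in $a^{-1}(\alpha^2)$; over a point $p \in \alpha^4$ this image can be an entire curve $a^{-1}(p)$, and a generic curve on $S_n$ linearly equivalent to $e_n$ will meet that curve. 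The fact that $e_\beta$ itself misses $\pi^{-1}(X^4)$ is no help once $e_\beta$ has been replaced.

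The paper's remedy is to change where the move is performed. Let $X^2_e$ be the component of $X^2$ containing the image $e_0$ of $e_\beta$, and let $S$ be the first two-dimensional center dominating $X^2_e$; Stein-factor $S \to S' \to X^2_e$ and then $S_n \to S'_n \to S'$. The composite $S'_n \to X^2_e$ is \emph{finite}, so the preimage of $X^4$ (hence of $\alpha^2$) in $S'_n$ is finite, exactly restoring the situation of Proposition~\ref{intro smooth ex}. The remaining point is that $S'_n$ need not be globally smooth, but one checks the curves contracted by $S_n \to S'_n$ lie over $X^4$ and hence are disjoint from $e_0$; thus $S'_n$ is smooth near the image $e'_n$ of $e_\beta$, and the usual rational function on $S'_n$ moves $e'_n$ off the finite bad set. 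Pulling back along $E \to S_n \to S'_n$ gives the desired rational equivalence.
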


\begin{proof} In this case $\alpha^2$ is finite (but may have support in $X^4$), $\beta^2$ is $1$-dimensional, $\beta^3$ is finite, and $\beta^4$ is empty.  The error terms appearing in $e(\alpha, \alpha')$ cannot be supported over $X^2 \setminus X^3$ since $p_2=0$.  The error terms supported over $X^3 \setminus X^4$ are handled (moved away from the finite set $\beta^3$) as in the previous case.  The error terms supported over $X^4$ are disjoint from $\widetilde{\beta}$ since $\beta^4 = \emptyset$.  The same reasoning applies with $\wt \beta$ replaced by $e_\beta$.

Now we show the vanishing $\pi_* (\wt \alpha \cdot e_\beta) =0$.
The error terms in $e(\beta, \beta')$ lying over $X^3$ are handled as in the previous case.  It remains to show $\widetilde{\alpha} \cdot e_\beta = 0$ when $e_\beta$ is a component of $e(\beta, \beta')$ whose generic point is supported over $X^2 \setminus X^3$.  We let $e_0$ denote the image of $e_\beta$ in $X^2$, and let $X^2_e$ denote the component of $X^2$ that contains $e_0$.

Let $S$ denote the first two-dimensional center that dominates $X^2_e$.  Then $S \to X^2_e$ is a resolution of singularities, and it (Stein) factors as $S \to S' \to X^2_e$, where $b: S' \to X^2_e$ omits those blowups along centers landing in $X^4$.  (While $S$ is a closed subvariety in some intermediate $X_j$, the variety $S'$ may not admit a closed immersion into any $X_j$.)  Thus $b$ is a finite morphism, the singular set of $S'$ is contained in $b^{-1}(X^4)$, and the proper transform $e_0^{'} \into S'$ of $e_0$ via $S' \to X^2_e$ is supported in the smooth locus of $S'$.

Suppose $e_\beta$ first appears on the blowup of the two-dimensional center $S_n$, and consider the Stein factorization $S_n \to S_n' \to S'$ of the morphism $S_n \to S'$. (Note $S_n \to S$ is dominant since the generic point of $e_\beta$ is supported over $X^2 \setminus X^3$, and of course $S_n=S$ is possible.)   Again choose an exceptional component $E \supset e_\beta$ dominating $S_n$.  Let $e_n$ denote the image of $e_\beta$ in $S_n$, and $e_n'$ its image in $S_n'$.  Since $S_n' \to S' \to X^2_e$ is finite, the preimage of $X^4$ in $S_n'$ is finite.

The morphism $S_n \to S_n'$ contracts exactly those curves lying over curves contracted by $S \to S'$.  Since the curves contracted by $S \to S'$ (more precisely the zero-dimensional image in $X^2$ of such curves) are disjoint from $e_0$, the morphism $S_n \to S_n'$ is an isomorphism in a neighborhood of $e_n$, hence $S_n'$ is smooth in a neighborhood of $e_n'$.  Now we proceed as usual.  We find a rational function on $S_n'$ moving $e_n'$ away from the finite preimage of $X^4$.  Then the composite rational function $E \to S_n \to S_n' \dashrightarrow \bb{P}^1$ moves a multiple of $e_\beta$ away from $\widetilde{\alpha}$, as required.
\end{proof}

\section{Necessity of condition (AC)} \label{need integral fibers}

Since the arguments of the previous section may seem intricate, one may ask if there is a simpler description of a stratification for which intersecting on the resolution induces a well-defined pairing.  In this section we show the stratification according to the fiber dimension of the resolution is insufficiently fine to obtain a well-defined intersection product, even when all the strata are smooth.  The example applies to both equivalence relations $\sim_{\ol p}$ and $\sim_{w, \ol p}$.
We work over a field $k$ of characteristic $\neq 2$.

\medskip
\textbf{An affine example.}  Let $X \into \bb{A}^4$ be the hypersurface defined by the vanishing of $x^2 - y^2 + tz^2$.  Then $X$ is singular along the line $L$ defined by $x=y=z=0$, and we claim a resolution $\pi : \widetilde{X} \to X$ is obtained by blowing up $X$ along $L$.  On the patch where $x$ generates (say $y = y'x$ and $z = z'x$), the blowup is defined by the vanishing of $1- {(y')}^2 + t{(z')}^2$, which is smooth.  On the patch where $y$ generates, the blowup is defined by the vanishing of ${(x')}^2 -1 + t{(z')}^2$, which is also smooth.  On the patch where $z$ generates, the blowup is defined by the vanishing of ${(x')}^2 - {(y')}^2 +t$, and this too is smooth.

Note that all of the fibers of $\pi^{-1}(L) \to L$ are one-dimensional.  Since $L$ is regular, this implies $\pi^{-1}(L) \to L$ is flat and so $L \into X$ is normally flat.  Taking into account only the fiber dimensions in the resolution and the singularities of the centers, we are led to the stratification $X^3 = \emptyset \into X^2 = X^1 = L \into X$.  The relevant feature (which this stratification ignores) is that the fiber of $\pi$ over $(0,0,0,0)$ has two components ($x'+y' = 0$ and $x'-y' =0$), whereas the fiber over $(t_0, 0, 0, 0)$ is irreducible for $t_0 \neq 0$.

Let $D \into X$ be the divisor defined by the ideal $(x+y,t)$, let $\alpha_0$ be the $1$-cycle defined by $(x-y,t,z)$, and let $\alpha_1$ be the $1$-cycle defined by $(x-y,t-1,z)$.  Each of the cycles $D, \alpha_0$, and $\alpha_1$ meets $L$ in a finite set, so $D$ has perversity $\ol 0$ and the $\alpha$'s have perversity $\ol 1 := (1,1,1)$.  Furthermore $\alpha_0 \sim \alpha_1$ since the $\alpha$'s arise as preimages of distinct values of the regular function $t$ on the surface $S$ cut out by $(x-y,z)$.  The equivalence respects the perversity condition $\ol 1$ since $t : S \to \bb{A}^1$ maps $L$ isomorphically onto $\bb{A}^1$, so that each fiber of $t$ meets $L$ exactly once.

Nevertheless, $\widetilde{D} \cap \widetilde{\alpha_0}$ consists of a single (reduced) point: on the patch where $z$ generates, the intersection occurs at $x' =  y' = t = z =0$.  On the other hand, $\widetilde{D} \cap \widetilde{\alpha_1} = \emptyset$ (in fact $D \cap \alpha_1 = \emptyset$). 
We conclude that the stratification is too coarse for the resolution to determine a well-defined intersection product.
Of course the example disappears if we use instead the stratification $X^3 = (0,0,0,0) \into X^2=X^1 = L \into X$, as implied by Theorem \ref{divisor pairing}.

\medskip
\textbf{Behavior at infinity.}  For the reader who does not take degrees of zero-cycles on an affine variety too seriously, we now show the behavior persists on the projective closure.

Let $\ul {X} \into \PP^4_{S,T,X,Y,Z}$ be the hypersurface cut out by $SX^2 - SY^2 + T Z^2$.  The singular locus of $\ul  X$ consists of three components, each abstractly isomorphic to $\PP^1$:
\begin{itemize}
\item $\Sigma_1 = Z(X,Y,Z)$, which is the closure of $L$ in the affine example;
\item $\Sigma_2 = Z(X - Y, S, Z)$; and
\item $\Sigma_3 = Z(X + Y, S, Z)$.
\end{itemize}

The intersection point $p = [0 : 1 : 0 : 0 : 0] = \Sigma_1 \cap \Sigma_2 \cap \Sigma_3$ (which did not appear in the affine example) may be placed in ${\ul X}^3$ (because the singular locus of $\ul  X$ is itself singular there, or because the resolution has different behavior over $p$).  Our constructions will preserve any perversity condition which forbids incidence with $p$, hence will show
$${\ul X}^3 = p \into {\ul X}^2 = {\ul X}^1 = \Sigma_1 \cup \Sigma_2 \cup \Sigma_3 \into \ul X$$
is not fine enough for the intersection product to be well-defined.  As in the affine example, ${\ul X}^3$ must include $[1:0:0:0:0]$ as well.

A resolution is obtained by blowing up $\ul  X$ along $\Sigma_1$, then blowing up the proper transforms $\wt{\Sigma_2}, \wt{\Sigma_3} \into B_{\Sigma_1} (\ul  X)$; since $\wt{\Sigma_2}$ and $\wt{\Sigma_3}$ are disjoint on $B_{\Sigma_1} (\ul  X)$, the order is irrelevant.  Initially blowing up $p$ does not improve the situation, in the sense that one finds a copy of the original singularity on $B_p (\ul X)$.  This is perhaps not too surprising since one finds Whitney umbrellas along $x=0$ and $y=0$, and these too are not resolved by blowing up the ``worst" point in the singular locus.

\begin{proposition} \label{ex strong res} The morphism
$$B_{\wt{\Sigma_2} \cup \wt{\Sigma_3}} (B_{\Sigma_1} (\ul  X ))\to B_{\Sigma_1} (\ul  X) \to \ul  X$$
is a strong resolution of the singularities of $\ul  X$. \end{proposition}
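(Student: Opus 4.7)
The plan is to verify the three defining conditions of a strong resolution---each center is smooth, each intermediate variety is normally flat along its center, and the final variety is smooth---by explicit local computation. Since the singular locus of $\ul X$ lies in $\{Z = 0\}$, it suffices to work in the affine charts $\{S = 1\}$, $\{T = 1\}$, and $\{X = 1\}$; the chart $\{Y = 1\}$ is handled by the involution $(S,T,X,Y,Z) \mapsto (S,-T,Y,X,Z)$, which is a symmetry of $\ul X$ exchanging the charts $\{X = 1\}$ and $\{Y = 1\}$, while the further involution $Y \mapsto -Y$ is a symmetry swapping $\Sigma_2$ and $\Sigma_3$.

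In the $\{S = 1\}$ chart the equation is $x^2 - y^2 + tz^2 = 0$: this is the affine example, for which the defining form has constant multiplicity $2$ along $\Sigma_1|_{S=1}$ (giving normal flatness) and the three standard charts of the blowup have already been exhibited as smooth. The second blowup is an isomorphism in this chart, so no further work is needed here.

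The substantive calculation is in the $\{T = 1\}$ chart, where the equation $SX^2 - SY^2 + Z^2 = 0$ exhibits all three components together with their common point $p$. A direct Jacobian check confirms the singular locus and shows the defining form has constant multiplicity $2$ along $\Sigma_1 = \{X = Y = Z = 0\}$, so the first blowup is legal. Computing the three affine charts of $B_{\Sigma_1}(\ul X)$ directly: the $X$-chart (setting $Y = y'X, Z = z'X$) yields the proper transform $S(1 - (y')^2) + (z')^2 = 0$, whose singular locus is $\{S = 0, y' = \pm 1, z' = 0\} = \wt{\Sigma_2} \sqcup \wt{\Sigma_3}$, now manifestly disjoint; the $Y$-chart is identical by $X \leftrightarrow Y$; and the $Z$-chart gives $S((x')^2 - (y')^2) + 1 = 0$, which is smooth since $S = 0$ contradicts the equation. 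Substituting $u := y' - 1$, the local equation near $\wt{\Sigma_2}$ becomes $-2Su - Su^2 + (z')^2 = 0$, of constant multiplicity $2$ along the smooth center $\{S = u = z' = 0\}$, so the multiplicity criterion for hypersurfaces gives normal flatness. Each of the three charts of the blowup of $\wt{\Sigma_2}$ is then smooth---for example, the $S$-chart yields $-2\tilde u - \tilde u^2 S + \tilde z^2 = 0$ with $\partial_{\tilde u}|_0 = -2 \neq 0$---and the $y' \leftrightarrow -y'$ symmetry handles $\wt{\Sigma_3}$.

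Finally, the chart $\{X = 1\}$ does not meet $\Sigma_1$, so the first blowup acts as an isomorphism there; the second is the blowup along the two disjoint smooth lines $\Sigma_2, \Sigma_3$ on the affine hypersurface $S - SY^2 + TZ^2 = 0$, and after the substitution $u = Y \mp 1$ the local geometry is identical to the $\wt\Sigma_2$ model above and is resolved by the same three-chart computation. The principal burden is chart bookkeeping across an iterated blowup; once the coordinates are fixed, every smoothness and normal-flatness verification reduces to a one-line Jacobian or multiplicity calculation.
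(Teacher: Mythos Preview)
Your proof is correct and follows essentially the same chart-by-chart computation as the paper's; the only methodological difference is that you verify normal flatness via Hironaka's constant-multiplicity criterion for hypersurfaces, whereas the paper instead observes that the exceptional divisors are irreducible over regular one-dimensional centers (hence flat). One small imprecision: in the $\{X=1\}$ chart the local equation near $\Sigma_2$ is $-2Su - Su^2 + TZ^2$, which is not literally identical to the $\wt\Sigma_2$ model $-2Su - Su^2 + (z')^2$ from the $\{T=1\}$ chart since $T$ is not a unit along the entire center; nonetheless the same style of three-chart blowup computation goes through (and indeed the paper also omits this verification), so your conclusion stands.
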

\begin{proof}
First we verify that the blowup along $\Sigma_1 \cup \Sigma_2 \cup \Sigma_3 - p$ resolves $\ul  X - p$; for this calculation the order of the blowup is irrelevant.  Then we analyze the behavior near $p$.

For the blowup along $\Sigma_1 - p$, we work on $S \neq 0$ and recover the affine example: the blowup of $x^2 - y^2 + tz^2$ along $x=y=z=0$.  Hence $B_{\Sigma_1 - p} (\ul  X - p)$ is smooth above $\Sigma_1 - p$.  The analysis of the other blowups is similar (work on $X \neq 0$) and we omit the details.

Having analyzed the morphism away from $p$, we analyze the blowup along $\Sigma_1$ where $T \neq 0$, so that we are blowing up $sx^2 - sy^2 + z^2 = 0$ along $x=y=z=0$.  Where $x$ generates, the blowup is defined by $s - s {(y')}^2 + {(z')}^2 =0$, and is singular along $1-{(y')}^2 = s = z' = 0$; these two singular points are exactly where the proper transforms $\wt{\Sigma_2}, \wt{\Sigma_3}$ meet the fiber over $p$.  Where $y$ generates, the blowup is defined by $s{(x')}^2 -s + {(z')}^2 =0$, and is singular along ${(x')}^2-1=s=z'=0$ (so that we see the same two points).  Where $z$ generates, the blowup is smooth.

The blowup of $B_{\Sigma_1}(\ul  X)$ along $\wt{\Sigma_2}$ is covered by two charts:
\begin{itemize}
\item the blowup of $s - s{(y')}^2 + {(z')}^2 =0$ along $1-y' = s = z' = 0$, and
\item the blowup of $s{(x')}^2 -s + {(z')}^2 =0$ along $x'-1 = s= z' =0$;
\end{itemize}
both of these are smooth.  The blowup along $\wt{\Sigma_3}$ is similar, except one uses the centers $1+y' = s = z' = 0$, and $x'+1 = s= z' =0$ in these charts.
All three centers are normally flat in their ambient spaces since the exceptional divisors over them are irreducible, and the centers are regular and one-dimensional.
\end{proof}

\begin{lemma} 
The stratification
$${\ul X}^3 = p \into {\ul X}^2 = {\ul X}^1 = \Sigma_1 \cup \Sigma_2 \cup \Sigma_3 \into \ul X$$
satisfies (BC-1), (BC-2), and (BC-3) with respect to the resolution of Proposition \ref{ex strong res}.  With respect to this stratification,
\begin{itemize}
\item the divisor $\ul  D = Z(X+Y, T) \into \ul X$ satisfies the perversity condition $\ol 0$, and
\item the 1-cycles $\ul  \alpha_0 = Z(X-Y,T,Z)$ and $\ul  \alpha_1 = Z(X-Y, T-S,Z)$ satisfy the perversity condition $\ol 1$ and are equivalent as 1-cycles of perversity $\ol 1$.
\end{itemize} 
Nevertheless, $\deg (\pi_* ( \wt{\ul  D} \cdot \wt{\ul  \alpha_0}) ) =1$ and $\wt{\ul  D} \cap \wt{\ul  \alpha_1} = \emptyset$.
\end{lemma}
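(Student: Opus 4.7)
The lemma consists of five independent claims: (a) the stratification satisfies (BC-1)--(BC-3); (b) $\ul D$ has perversity $\ol 0$; (c) each $\ul \alpha_k$ has perversity $\ol 1$ and the two are weakly $\ol 1$-equivalent; (d) $\wt{\ul D} \cap \wt{\ul \alpha_1} = \emptyset$; and (e) $\deg \pi_*(\wt{\ul D} \cdot \wt{\ul \alpha_0}) = 1$. For (a), the centers appearing in the resolution of Proposition \ref{ex strong res} are the three smooth one-dimensional subvarieties $\Sigma_1 \into \ul X$ and $\wt \Sigma_2, \wt \Sigma_3 \into B_{\Sigma_1} \ul X$, each with one-dimensional image $\Sigma_i$ inside the singular locus of $\ul X$. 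Since each center is one-dimensional with zero-dimensional generic fiber, (BC-1) places the $\Sigma_i$ in $\ul X^2$ with no further strata required, (BC-2) is vacuous, and (BC-3) is vacuous because each $\Sigma_i \cong \PP^1$ is smooth. The inclusion of $p$ in $\ul X^3$ is a refinement compatible with---but not forced by---these conditions, which is precisely the deficiency this example exploits.

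For (b) and (c), direct intersection computations suffice. One finds $\ul D \cap \Sigma_1 = \{[1:0:0:0:0]\}$, $\ul D \cap \Sigma_2 = \emptyset$, $\ul D \cap \Sigma_3 = \{[0:1:-1:0:0]\}$, and $p \notin \ul D$, establishing perversity $\ol 0$. Each $\ul \alpha_k$ is a projective line meeting $\Sigma_1 \cup \Sigma_2 \cup \Sigma_3$ in isolated points and avoiding $p$, establishing perversity $\ol 1$. For the weak equivalence, consider the surface $\ul W := \ul X \cap V(X-Y, Z) \cong \PP^2$ parameterized by $[S:X:T] \mapsto [S:X:X:T:0]$ (the hypersurface equation vanishes identically when $X = Y$ and $Z = 0$). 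The pencil $\{T = \lambda S\}_{\lambda \in \bb A^1}$ of lines on $\ul W$ restricts to $\ul \alpha_0$ at $\lambda = 0$ and to $\ul \alpha_1$ at $\lambda = 1$, with every intermediate fiber a projective line meeting $\Sigma_1$ and $\Sigma_2$ in single points and avoiding $\Sigma_3$ and $p$, hence of perversity $\ol 1$.

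For (d), the equations $X+Y = X-Y = T = T-S = Z = 0$ simultaneously force $X = Y = S = T = Z = 0$, leaving no nonzero projective coordinate. Thus $\ul D \cap \ul \alpha_1 = \emptyset$ already in $\ul X$, whence $\wt{\ul D} \cap \wt{\ul \alpha_1} = \emptyset$ on the resolution.

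The main task is (e). Since $\ul D \cap \ul \alpha_0 = \{p_0 := [1:0:0:0:0]\}$ lies on $\Sigma_1$ away from $p$, and since the blowups of $\wt \Sigma_2, \wt \Sigma_3$ occur only over $p$, the computation reduces to a local analysis in the affine chart $S = 1$ after the first blowup along $\Sigma_1$: this is precisely the affine example, with $\ul X \cap \{S = 1\} = V(x^2 - y^2 + tz^2)$ and center $V(x, y, z)$. On the patch where $z$ generates the blowup, the proper transform of $\ul X$ is the smooth 3-fold $V((x')^2 - (y')^2 + t)$, the proper transform of $\ul D$ is $V(x' + y', t)$, and the exceptional fiber over $p_0$ is the reducible curve $V((x'-y')(x'+y'), t, z)$, reflecting the tangent cone $V(x^2-y^2)$ of $\ul X$ at $p_0$. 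Tracing the contribution from the proper transform of $\ul \alpha_0$ through this fiber yields a single reduced intersection point at $x' = y' = t = z = 0$, and no further contributions arise from the other patches or from the subsequent blowups. Pushing forward along $\pi$ gives the claimed degree $1$. The principal obstacle is this intersection-theoretic accounting on the reducible exceptional fiber, where one must identify precisely which component of the fiber meets each proper transform and verify that no hidden cancellations or extra contributions from other charts occur.
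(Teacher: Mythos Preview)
Your argument is correct and matches the paper's: verify finite incidence of $\ul D$ and the $\ul\alpha_k$ with the strata, realize the $\ol 1$-equivalence via a family of lines in the plane $P=Z(X-Y,Z)$ (your pencil $\{T=\lambda S\}$ is exactly the family the paper constructs, described there via the dual $\check P$ as the pencil of lines through $[0:0:1:1:0]$ with $\Sigma_2$ removed), and reduce the degree computation to the affine chart $S\neq 0$ already analyzed. One cosmetic slip: your point $[0:1:-1:0:0]$ for $\ul D\cap\Sigma_3$ and your parameterization $[S:X:T]\mapsto[S:X:X:T:0]$ use an ordering inconsistent with the paper's $[S:T:X:Y:Z]$; in that ordering these should read $[0:0:1:-1:0]$ and $[S:T:X]\mapsto[S:T:X:X:0]$.
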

\begin{proof}
The first claim follows from the following incidence properties: $\ul  D \cap \Sigma_1 = [ 1 : 0 : 0 : 0 : 0 ] \ , \ \ul  D \cap \Sigma_2 = \emptyset \ , \ \ul  D \cap \Sigma_3 = [ 0 :0:1:1:0]$.

Next we analyze the rational equivalence relating $\ul  \alpha_0 = Z(X-Y,T,Z)$ and $\ul  \alpha_1 = Z(X-Y, T-S,Z)$.  Both of these 1-cycles are lines in $\PP^2 \cong Z(X - Y, Z) =: P \into \ul  X$.  Note $\Sigma_1 \cup \Sigma_2 \into P$, so we must ensure the equivalence $\ul  \alpha_0 \sim \ul  \alpha_1$ can be chosen to respect the perversity condition.  Since $\Sigma_3 \cap P = p$, the component $\Sigma_3$ poses no difficulty beyond that presented by $\Sigma_1 \cup \Sigma_2$.

We claim there exists an $\bb{A}^1$-relative 1-cycle $A \into \ul  X \times \bb{A}^1$ with the following properties:
\begin{enumerate}
\item $A_0 = \ul \alpha_0, A_1 = \ul  \alpha_1$;
\item all of the fibers $A_t$ are disjoint from $p$; and
\item all of the fibers $A_t$ meet $\Sigma_1 \cup \Sigma_2$ in finitely many points (in fact, in exactly two points).
\end{enumerate}
This means exactly that $A$ determines an $\bb{A}^1$-family of 1-cycles of perversity $\ol 1$, so that $\ul \alpha_0 \sim_{\ol 1} \ul \alpha_1$.  (Of course $\ul \alpha_0$ and $\ul \alpha_1$ are then weakly equivalent as 1-cycles of perversity $\ol 1$.)

We consider $[\ul  \alpha_0], [ \ul  \alpha_1], [\Sigma_1],$ and $[\Sigma_2]$ as points of $\check P$, the $\PP^2$ dual to $P$.  The lines $ \ul \alpha_0, \ul  \alpha_1, \Sigma_2$ meet at $[0:0:1:1:0]$, and this point does not belong to $\Sigma_1$.  Therefore $[\ul  \alpha_0], [ \ul  \alpha_1],$ and $[\Sigma_2]$ lie on a line $\ell \into \check P$, and $[\Sigma_1] \notin \ell$.  Then the family $A$ is the family of lines corresponding to the canonical morphism $\ell - [\Sigma_2] \to \check P$.  Since this family avoids $[\Sigma_1]$ and $[\Sigma_2]$, all of the fibers $A_t$ meet $\Sigma_1 \cup \Sigma_2$ exactly twice.  Furthermore, exactly one point $[L] \in \ell$ corresponds to a line containing $p$; this is $[\Sigma_2]$, hence all of the fibers $A_t$ are disjoint from $p$.

The incidences are contained in the locus where $S \neq 0$, so are captured by the affine situation.
\end{proof}

\begin{remark}
In this example, the family $A$ of 1-cycles cannot be extended to a $\PP^1$-family respecting the perversity condition $\ol 1$.  However, by performing the blowups in a different order, one can find an example of the same essential phenomenon, and so that the $\bb{A}^1$-family relating the 1-cycles extends to a $\PP^1$-family, all of whose fibers satisfy the perversity condition $\ol 1$.  Namely, we first blow up $\Sigma_2$ on $\ul X$, then blow up the proper transform of $\Sigma_3$.  The resulting variety $\ul X'$ is singular exactly along ${\Sigma'_1}$, the proper transform of $\Sigma_1$.  There are no incidences $\ul D' \cap \ul \alpha_i'$ in the exceptional divisors over $\Sigma_2 \cup \Sigma_3$, so the incidences are captured by the affine situation.  The variety $\ul X'$ is resolved by blowing up $\Sigma_1$, and $B_{\Sigma'_1} (\ul X')$ has one-dimensional fibers over $\Sigma'_1$, all of which are irreducible except one (the same which appears in the affine example).

Note $\Sigma_2 \into P$ is already Cartier, so the proper transform of $P$ via the blowup along $\Sigma_2$ is isomorphic to $P$.  Since $\Sigma_3$ meets $P$ in a single point, the transform $P'$ of $P$ on $\ul X'$ is isomorphic to $\PP^2$ blown up at a single point.  

The blown up point is not contained in any of the lines $\Sigma_1', \ul \alpha'_0, \ul \alpha'_1$.  The rational function $P \dashrightarrow \PP^1$ relating $\ul \alpha_0$ to $\ul \alpha_1$ may be considered as a rational function $P' \dashrightarrow \PP^1$ relating $\ul \alpha'_0$ to $\ul \alpha'_1$; since $\Sigma'_1$ does not occur as a fiber of this map, we may think of this function as a $\PP^1$-family of 1-cycles of perversity $\ol 1$ for the stratification ${(\ul X')}^3 = \emptyset \into {(\ul X')}^2 = {(\ul X')}^1 = \Sigma'_1 \into \ul X'$.
\end{remark}

\section{Further questions} \label{sec:further}

\textbf{Independence of resolution.}  Given two resolutions $\pi_1 :X_1 \to X \ , \ \pi_2 : X_2 \to X$ of the variety $X$, we have defined two stratifications $S_1, S_2$ of $X$, and (in certain situations) products $\bu_i : A_{r, \ol p}(X, S_i) \otimes A_{s, \ol q}(X, S_i) \to A_{r+s-d}(X)$ (for $i=1,2$) by pushing forward the intersection formed on the resolution.  (We make explicit the dependence of the group $A_{r, \ol p}(X)$ on the stratification, and for simplicity we drop the superscript $w$ and the coefficients.)  For any stratification $S$ that refines both $S_1$ and $S_2$, we have canonical morphisms $C^i_{r, \ol p}: A_{r,  \ol p}(X, S) \to A_{r, \ol p}(X, S_i)$ (for $i=1,2$), and thus it makes sense to ask whether $\bu_1 (C^1_{r, \ol p} \otimes C^1_{s, \ol q})$ and $\bu_2 (C^2_{r, \ol p} \otimes C^2_{s, \ol q} )$ coincide as morphisms $A_{r,  \ol p}(X, S) \otimes A_{s, \ol q}(X, S) \to A_{r+s-d}(X)$.  We assume we are in one of the situations in which the resolution is known to induce a well-defined product.

The simplest case is when $\pi_2$ is obtained from $\pi_1$ by a sequence of blowups of $X_1$ along smooth centers.  In this case we have a canonical morphism 
$C_{r, \ol p} : A_{r,  \ol p}(X, S_2) \to A_{r, \ol p}(X, S_1)$ (i.e., the stratification induced by $\pi_2$ refines the one induced by $\pi_1$).  

\begin{proposition} With the notation and hypotheses as above, suppose $f : X_2 \to X_1$ is a composition of blowups along smooth centers, and set $\pi_2 = \pi_1 \circ f : X_2 \to X$.  Then we have $\bu_2 = \bu_1 (C_{r, \ol p} \otimes C_{s, \ol q}) : A_{r, \ol p}(X,S_2) \otimes A_{s, \ol q}(X, S_2) \to A_{r+s-d}(X)$. \end{proposition}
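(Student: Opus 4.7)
The plan is to use the factorization $\pi_2 = \pi_1 \circ f$, which gives $(\pi_2)_* = (\pi_1)_* \circ f_*$, together with the fact that $\wt\alpha^{(2)}$ is the proper transform via $f$ of $\wt\alpha^{(1)}$ (and likewise for $\beta$). Thus $\bu_2(\alpha, \beta) = (\pi_1)_* f_* (\wt\alpha^{(2)} \cdot \wt\beta^{(2)})$, and the desired equality reduces to
\[
(\pi_1)_*(\wt\alpha^{(1)} \cdot \wt\beta^{(1)}) = (\pi_1)_* f_*(\wt\alpha^{(2)} \cdot \wt\beta^{(2)}) \quad \text{in } A_{r+s-d}(X).
\]

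Next, since $X_1$ and $X_2$ are smooth and $f$ is birational, I would apply the projection formula for $f$ to get $\wt\alpha^{(1)} \cdot \wt\beta^{(1)} = f_*(f^*\wt\alpha^{(1)} \cdot f^*\wt\beta^{(1)})$. Writing $f^*\wt\alpha^{(1)} = \wt\alpha^{(2)} + e_\alpha$ and $f^*\wt\beta^{(1)} = \wt\beta^{(2)} + e_\beta$, where $e_\alpha, e_\beta$ are cycles supported on the exceptional locus of $f$, this expands to
\[
\wt\alpha^{(1)} \cdot \wt\beta^{(1)} = f_*(\wt\alpha^{(2)} \cdot \wt\beta^{(2)}) + f_*\bigl(e_\alpha \cdot \wt\beta^{(2)} + \wt\alpha^{(2)} \cdot e_\beta + e_\alpha \cdot e_\beta\bigr).
\]
Applying $(\pi_1)_*$, the proposition reduces to showing that $(\pi_2)_*$ annihilates each of the three cross terms $e_\alpha \cdot \wt\beta^{(2)}$, $\wt\alpha^{(2)} \cdot e_\beta$, and $e_\alpha \cdot e_\beta$.

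These vanishings should follow from essentially the same arguments used in the proofs of Theorems \ref{divisor pairing}, \ref{one dim sing}, and \ref{4fold}. Each component of $e_\alpha$ (or $e_\beta$) is supported on an exceptional divisor for some blowup step in $f$, lying over a center $Z_n$ of an intermediate variety whose image in $X$ belongs to a specific stratum of $S_2$ by (BC-1)--(BC-3) and (AC) of Definition \ref{strat res defn}. The perversity hypothesis $\ol p + \ol q = \ol t$ with respect to $S_2$ then yields the dimensional control on $\wt\alpha^{(1)} \cap Z_n$ and $\wt\beta^{(1)} \cap Z_n$ needed to force the cross terms to vanish after push-forward to $X$, via the same dimension counts and projection-formula manipulations invoked in the main theorems (for instance the arguments showing $\pi_*(\wt D \cdot e_\alpha) = 0$ in the proof of Theorem \ref{divisor pairing}, or $\pi_*(\wt \alpha \cdot e_\beta) = 0$ in the proofs of \S \ref{sec:4fold}).

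The main obstacle is conceptual rather than computational. As the remark following Proposition \ref{smooth case} warns, one cannot simply iterate Proposition \ref{smooth case} stage by stage along the blowups of $f$, since the relevant incidence conditions on proper transforms may degrade at each step. The fix is to work globally: the stratification $S_2$ by construction records the contribution in $X$ of every center of $\pi_2$ (including all intermediate centers of $f$), so a perversity condition on $X$ relative to $S_2$ is strong enough to control the relevant incidences even where Proposition \ref{smooth case} applied to a single factor of $f$ would not suffice. When a naive dimension count still fails, one moves the offending error component $e_\alpha$ within its rational equivalence class via a rational function on the appropriate center, exactly as in the cycle-moving arguments of the proof of Proposition \ref{intro smooth ex}, thereby making it disjoint from the other cycle and forcing the push-forward to vanish (with rational coefficients where required).
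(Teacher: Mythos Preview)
Your proposal is correct and follows essentially the same route as the paper: expand $f_*(f^*\wt\alpha^{(1)} \cdot f^*\wt\beta^{(1)})$ using $f^*\wt\alpha^{(1)} = \wt\alpha^{(2)} + e_\alpha$ (and similarly for $\beta$), then observe that the cross terms involving $e_\alpha, e_\beta$ are error terms first appearing on one of the blowups of $f$, so the vanishing arguments of the main theorems apply verbatim to kill $(\pi_2)_*$ of each. Your discussion of why the stratification $S_2$ is fine enough to make those arguments go through is a welcome elaboration, but the underlying mechanism is the same as the paper's.
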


\begin{proof}  Let $(-)_i$ denote the proper transform of a cycle on $X$ via $\pi_i : X_i \to X$.  We have $f^* (\alpha_1) = \alpha_2 + e_\alpha$ and $f^* (\beta_1) = \beta_2 + e_\beta$.  Since $\alpha_1 \cdot \beta_1 = f_* (f^* (\alpha_1) \cdot f^* (\beta_1))$, it follows that
$${(\pi_1)}_* (\alpha_1 \cdot \beta_1) = {(\pi_2)}_* (\alpha_2 \cdot \beta_2) + {(\pi_2)}_*(\alpha_2 \cdot e_\beta + e_\alpha \cdot \beta_2 + e_\alpha \cdot e_\beta).$$
The cycles $e_\alpha$ and $e_\beta$ may be thought of as error terms which first appear on one of the blowups occurring in the morphism $f$.  Therefore our arguments apply to show the vanishing of the cycle class of ${(\pi_2)}_*(\alpha_2 \cdot e_\beta + e_\alpha \cdot \beta_2 + e_\alpha \cdot e_\beta)$, and therefore we obtain the equality ${(\pi_1)}_* (\alpha_1 \cdot \beta_1) = {(\pi_2)}_* (\alpha_2 \cdot \beta_2).$
\end{proof}

If $k$ is an algebraically closed field of characteristic zero and $X$ is complete, then two resolutions of $X$ are related by a sequence of smooth blowups and blowdowns by the weak factorization theorem of Abramovich-Karu-Matsuki-W{\l}odarczyk \cite[Thm.~0.1.1]{AKMW}.  Since the intermediate varieties do not necessarily admit morphisms to $X$, it is not clear how to obtain a comparison of the products formed via two resolutions.  If the strong factorization conjecture \cite[Conj~0.2.1]{AKMW} holds, however, then there is variety $Y$ admitting morphisms $f_i : Y \to X_i$ which are compositions of blowups along smooth centers, and such that $\pi_1 \circ f_1 = \pi_2 \circ f_2 : Y \to X$.  In this case we could conclude that the products defined using $X_1$ and $X_2$ agree upon restriction to $A_{r, \ol p}(X, S_Y) \otimes A_{s, \ol q}(X, S_Y)$, i.e., the group formed using the stratification induced by the resolution $Y \to X$.

\medskip
\textbf{Comparison with Goresky-MacPherson product.} It would be interesting to know whether our intersection product (when it is defined) agrees with the intersection pairing defined by Goresky-MacPherson via the cycle class mapping.  The difficulty in making the comparison is that there is no obvious way to take the proper transform of a topological cycle.  A cycle on $X$ gives rise to a canonical cycle on $\wt X$ relative to $\wt E$, and using this one can show the products agree after composing with the canonical map $H_*(X) \to H_*(X, X^1)$.  For pairs of cycles with supports intersecting properly in each stratum, or more generally any pair which is weakly $(\sim_{\ol p}, \sim_{\ol q})$-equivalent to such a pair, our intersection product agrees with that of Goresky-MacPherson since in this case both may be described as taking the closure of an intersection product formed on the smooth locus.

\medskip
\textbf{Refinements for small perversities.} If $\ol p + \ol q < \ol t$, is there a refinement
$$A_{r, \ol p}(X) \otimes A_{s, \ol q}(X) \to A_{r+s - d, \ol p + \ol q}(X) \ \ ?$$
This seems difficult to achieve using resolutions.  For example, suppose $\alpha \in A_{r, \ol 0}(X)$ and $D \in A_{d -1, \ol 0}(X)$.  The fibers of $\wt \alpha \cap \pi^{-1}(X^i) \to \alpha \cap X^i$ are typically $(i-1)$-dimensional (the source is typically $(r-1)$-dimensional, and the target is typically $(r-i)$-dimensional).  To find a representative of $\pi_* (\wt \alpha \cdot \wt D)$ in $A_{r-1, \ol 0}(X)$, one would seek a divisor $\wt D_1 \sim \wt D \into \wt X$ such that the image of $\wt \alpha \cap \pi^{-1}(X^i) \cap \wt D_1 \to \alpha \cap X^i$ is not dense.  This means $\wt D_1$ misses most fibers of the morphism $\wt \alpha \cap \pi^{-1}(X^i) \to \alpha \cap X^i$.  One might try the following technique for moving divisors, at least in the quasi-projective case: find some effective divisor $D$ (with better incidence properties) such that $\wt D + D$ is ample, then use $\wt D_1 = H-D$ for some $H \in |\wt D + D|$.
Unfortunately, the divisor $H$, being ample, will meet \textit{every} fiber of the morphism $\wt \alpha \cap \pi^{-1}(X^i) \to \alpha \cap X^i$ if $i-1 \geq 1$.

\bibliography{intlawhom}{}
\bibliographystyle{plain}

\end{document}